\DeclareMathAlphabet{\mathpzc}{OT1}{pzc}{m}{it}
\theoremstyle:=definition,remark,plain\do{%
     \expandafter\g@addto@macro\csname th@\theoremstyle\endcsname{%
        \addtolength\thm@preskip\parskip
     }%
   }
\swapnumbers \theoremstyle{plain}
\newtheorem{lem}[subsubsection]{Lemma}
\newtheorem{prop}[subsubsection]{Proposition}
\newtheorem{thm}[subsubsection]{Theorem}
\newtheorem{mainthm}[subsection]{Theorem}
\newtheorem*{thm*}{Theorem}
\theoremstyle{definition}
\newtheorem{ex}[subsubsection]{Example}
\newtheorem{exs}[subsubsection]{Examples}
\newtheorem{rem}[subsubsection]{Remark}
\newtheorem{rems}[subsubsection]{Remarks}
\titleformat{\subsection}[runin]{\normalfont\normalsize\bfseries}{\thesubsection}{0.4em}{}[.]
\titleformat{\subsubsection}[runin]{\normalfont\normalsize\bfseries}{\thesubsubsection}{0.4em}{}[.]
\titleformat{\paragraph}[runin] {\normalfont\normalsize\bfseries}{\theparagraph}{0.4em}{}[.]
\titleformat{\subparagraph}[runin] {\normalfont\normalsize\bfseries}{\thesubparagraph}{0.4em}{}[.]
\titlespacing*{\subsection} {0pt}{3.25ex plus 1ex minus .2ex}{0.4em} 
\titlespacing*{\subsubsection}{0pt}{3.25ex plus 1ex minus .2ex}{0.4em} 
\titlespacing*{\paragraph} {0pt}{3.25ex plus 1ex minus .2ex}{0.4em} 
\titlespacing*{\subparagraph} {\parindent}{3.25ex plus 1ex minus .2ex}{0.4em}
\newenvironment{examples}[1][\mbox{}]
{\goodbreak\begin{exs}#1\vspace{-.3em}\begin{enumerate}[label=(\arabic{*})]}{\end{enumerate}\end{exs}\par\addvspace{\baselineskip}}
\newcommand{\into}{\hookrightarrow} %embedding arrow
\renewcommand{\to}{\xrightarrow{\rule{1.45ex}{0ex}}} %to arrow
\newcommand{\xtimes}{\boxtimes}
\newcommand{\dand}{\qquad\text{and}\qquad} %"display" \and command
\newcommand{\C}{\mathsf{c}} %exponent for set-complement
\newcommand{\comma}{\mathrel{\downarrow}}
\newcommand{\conv}{\mathrm{conv}}
\newcommand{\df}{\emph} %used when defining a term
\renewcommand{\epsilon}{\varepsilon}
\newcommand{\inv}{{-1}} %exponent for inverse
\newcommand{\kleisli}{\circ} %kleisli composition
\newcommand{\nbhd}{\mathrm{nbhd}}
\newcommand{\ob}{\mathop{\mathrm{ob}}\nolimits} %objects
\newcommand{\op}{\mathrm{op}} %exponent for opposite category
\newcommand{\dc}{{\mathop{\downarrow}\nolimits}} %down-closure
\newcommand{\cat}[1]{\mathsf{#1}}
\newcommand{\Ccd}{\cat{Ccd}} %category of constructively completely distributive lattices
\newcommand{\Cnt}{\cat{Cnt}} %category of continuous lattices
\newcommand{\Int}{\cat{Int}} %category of interior spaces
\newcommand{\Mnd}{\cat{Mnd}} %category of monads
\newcommand{\PrOrd}{\cat{PrOrd}} %category of preordered sets
\newcommand{\Set}{\cat{Set}} %category of sets
\newcommand{\Sup}{\cat{Sup}} %category of sup-semilattices
\newcommand{\Top}{\cat{Top}} %category of topological spaces
\newcommand{\Cats}[1]{#1\text{-}\cat{Cat}} %category of #1-categories
\newcommand{\Mons}[1]{\cat{Mon(\Set_{#1})}} %category of #1-monoids
\DeclareMathOperator{\Up}{\mathrm{Up}\hspace*{0ex}} %Up functor on Ord
\newcommand{\mon}[1]{\mathbb{#1}}
\newcommand{\mF}{\mon{F}} %filter monad
\newcommand{\mP}{\mon{P}} %powerset monad
\newcommand{\mS}{\mon{S}} %generic monad
\newcommand{\mT}{\mon{T}} %generic monad
\newcommand{\mU}{\mon{U}} %up-set monad
\newcommand{\mfrak}[1]{\mathpzc{#1}}
\newcommand{\ff}{\mfrak{f}}
\newcommand{\fF}{\mfrak{F}}
\newcommand{\fx}{\mfrak{x}}
\newcommand{\fX}{\mfrak{X}}
\newcommand{\fy}{\mfrak{y}}
\newcommand{\cali}[1]{\mathcal{#1}}
\newcommand{\sA}{\cali{A}}
\newcommand{\sV}{\cali{V}}
\newcommand{\ev}{\mathrm{ev}} %evaluation
\newcommand{\two}{\mathsf{2}} %quantale
\begin{document}

\title{Exponential Kleisli monoids as Eilenberg--Moore algebras}

\author{Dirk Hofmann%\\
%Departamento de Matem\'atica\\
%Universidade de Aveiro\\
%3810-193 Aveiro, Portugal\\
%\small\texttt{dirk@ua.pt}
\and Fr\'ed\'eric Mynard%\\
%Department of Mathematical Sciences\\
%Georgia Southern University\\
%Statesboro, GA 30460, United States\\
%\texttt{fmynard@georgiasouthern.edu} 
\and Gavin J. Seal\thanks{Financial support by a Marie-Curie International Reintegration Grant is gratefully acknowledged.}%\\
%Institute of Geometry, Algebra and Topology\\
%Ecole Polytechnique F\'ed\'erale de Lausanne\\
%1015 Switzerland\\
%\texttt{gavin.seal@epfl.ch}
}

%\date{July 22, 2009}

%\address{Institute of Geometry, Algebra and Topology\\Ecole Polytechnique F\'ed\'erale de Lausanne\\1015 Switzerland\\\texttt{gseal@fastmail.fm}}

%Mathematics Subject Classification: 18C20, 18B30, 54A05

\maketitle

\begin{abstract}
Lax monoidal powerset-enriched monads yield a monoidal structure on the category of monoids in the Kleisli category of a monad. Exponentiable objects in this category are identified as those Kleisli monoids with algebraic structure. This result generalizes the classical identification of exponentiable topological spaces as those whose lattice of open subsets forms a continuous lattice.
\end{abstract}

{\small\textit{Keywords:} exponentiable object, monad, monoidal category, topological category}\\
{\small\textit{Mathematics Subject Classification:} 18C20, 18B30, 54A05}

%%%%%%%%%%%%%%%%%%%%%%%%%%%%%%%%%%%%%%%%%%%%%%%%%%%%%%%%%%%%%%%%%%%%%%%%%%%%%%%%%%%%%%%%%%%%%%%%%%%%%%%%

%\section{Introduction}
%
%None here

%%%%%%%%%%%%%%%%%%%%%%%%%%%%%%%%%%%%%%%%%%%%%%%%%%%%%%%%%%%%%%%%%%%%%%%%%%%%%%%%%%%%%%%%%%%%%%%%%%%%%

\nocite{ColRic:01}

\section{Introduction}

The classical identification of exponentiable topological spaces by Day and Kelly (\cite{DayKel:70}, see \cite{EscLawSim:04} or \cite{Isb:86}) can be summarized as follows:

\begin{quote}
A topological space is exponentiable if and only if its set of open subsets, ordered by inclusion, forms a continuous lattice.
\end{quote}

In \cite{Day:75}, Day subsequently showed that continuous lattices were precisely algebras for the filter monad $\mF=(F,\mu,\eta)$ on $\Set$. The occurrence of a monad in the cited exponentiability result seemed anecdotal---until G\"ahler remarked in \cite{Gae:92} that a topological space is exactly a monoid in the Kleisli category of $\mF$ (see Section \ref{ssec:KlMon} for details). By identifying the set of open subsets of a topological space $X$ with the set $[X,\two]$ of continuous maps into the Sierpinski space $2$, and defining a map $\conv:F[X,2]\to [X,2]$ that sends a filter $\fF$ on the set of open subsets of $X$ to the set $\bigcup_{A\in\fF}(\,\bigcap A)^\circ$ (with $(-)^\circ$ denoting the interior operation), Day and Kelly's result can thus be rephrased as:

\begin{quote}
An $\mF$-monoid $X$ is exponentiable if and only if $([X,2],\conv)$ is an $\mF$-algebra.
\end{quote}

The categorical product, that appears in the definition of exponentiability as a functor
\[
X\times(-):\Top\to\Top\ ,
\]
can itself be related to $\mF$, via the monoidal natural transformation $\kappa:F(-)\times F(-)\to F(-\times -)$ whose components sends a pair of filters to their product. These features point to the ubiquitous role of the filter monad. In this article, we introduce sufficient conditions for a monad $\mT$ on $\Set$ to reproduce the behavior of $\mF$: such a monad will be called a lax monoidal powerset-enriched monad, and Day and Kelly's result becomes our Theorems \ref{thm:1} and \ref{thm:2},  summarized as follows.

\begin{mainthm}\label{thm:3}
Let $\mT$ be a lax monoidal powerset-enriched monad, and $\sV$ a family of $\mT$-algebras that is  initially dense in $\Mons{\mT}$.

A $\mT$-monoid $(X,\alpha)$ is exponentiable if and only if $([X,V],\conv)$ is a $\mT$-algebra for all $V\in\sV$.
\end{mainthm}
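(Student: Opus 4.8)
The plan is to prove the two implications of the statement separately, with exponentiability of $(X,\alpha)$ understood as the existence of a right adjoint $(-)^X$ to the functor $X\times(-)\colon\Mons{\mT}\to\Mons{\mT}$. Throughout I would exploit that $\sV$ is initially dense: every $\mT$-monoid $Y$ sits as an initial subobject of a product $\prod_i V_i$ with $V_i\in\sV$, so that both the construction of exponentials and the checking of universal properties can be reduced to the test objects $V\in\sV$. I would also use freely that $\Mons{\mT}$ is topological over $\Set$, so that initial structures and products are computed as in the base, and that the underlying-set functor is represented by the unit object $1$ for $\times$ (which satisfies $1\times(-)\cong\mathrm{id}$); applied to $V^X$ through the adjunction this yields $\Mons{\mT}(1,V^X)\cong\Mons{\mT}(1\times X,V)\cong[X,V]$.

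For necessity, suppose $(X,\alpha)$ is exponentiable. For each $V\in\sV$ the exponential $V^X$ then exists, and by the computation above its underlying set is $[X,V]$. The first task is to identify the $\mT$-monoid structure of $V^X$ with the map $\conv$; since $\conv$ is defined precisely so that the evaluation $\ev\colon([X,V],\conv)\times X\to V$ is a morphism, this follows from the uniqueness of a structure turning $\ev$ into the counit of the adjunction. It then remains to promote this monoid to a $\mT$-algebra, and here I would invoke that $V$ is itself a $\mT$-algebra and that the $\mT$-algebras are closed under exponentials in $\Mons{\mT}$: whenever such an exponential exists with algebra exponent, it is again an algebra. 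This forces $V^X=([X,V],\conv)$ to carry an algebra structure, as required.

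For sufficiency, assume $([X,V],\conv)$ is a $\mT$-algebra for every $V\in\sV$. I would first settle the test objects: show that for such $V$ the evaluation $\ev\colon([X,V],\conv)\times X\to V$ is a morphism of $\mT$-monoids, and that for every $Z$ transposition along $\ev$ yields a natural bijection $\Mons{\mT}(Z\times X,V)\cong\Mons{\mT}(Z,([X,V],\conv))$, exhibiting $([X,V],\conv)$ as the exponential $V^X$. To pass to an arbitrary $\mT$-monoid $Y$, I would present $Y$ as an initial subobject of $\prod_i V_i$ and define $Y^X$ as the initial subobject of $\prod_i V_i^X=\prod_i([X,V_i],\conv)$ carried by the set $[X,Y]$; verifying that this is well defined and that the adjunction isomorphism extends from the $V_i$ to $Y$ then produces the right adjoint $(-)^X$ and hence the exponentiability of $(X,\alpha)$.

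I expect the genuine obstacle to lie in the sufficiency direction, in the two transposition statements for the test objects: that $\ev$ is a morphism, and that $g\colon Z\times X\to V$ is a morphism if and only if its transpose $Z\to([X,V],\conv)$ is one. This is exactly where the algebra structure on $[X,V]$—and not merely a monoid structure—is indispensable, since it supplies the approximation data that classically underlies the passage from the Scott topology on open sets to continuity of evaluation. A secondary difficulty is the gluing step: transferring the adjunction from the $V_i$ to $Y$ requires that $X\times(-)$ respects the initial embedding $Y\into\prod_i V_i$, for which I would check that $X\times(-)$ preserves initial sources (equivalently, embeddings) in the topological category $\Mons{\mT}$.
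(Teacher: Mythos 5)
Your overall architecture (treating the two implications separately, letting the test objects $V\in\sV$ carry the argument, and using initial density to transfer the couniversal property to arbitrary $Y$) matches the paper's decomposition into Theorems \ref{thm:1} and \ref{thm:2}. However, the necessity direction as you propose it is circular. You plan to ``invoke that the $\mT$-algebras are closed under exponentials in $\Mons{\mT}$: whenever such an exponential exists with algebra exponent, it is again an algebra.'' That closure statement \emph{is} Theorem \ref{thm:1}; it is not a citable general fact, and it is precisely the Day--Kelly-type content of the result (for $\mT=\mF$ it says that the exponential of the Sierpinski space by an exponentiable space is a continuous lattice, which is the hard half of the classical theorem). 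The paper has to earn it: starting from the exponential structure $\nbhd:[X,V]\to T[X,V]$, it extracts $\conv$ as the unique homomorphism $(T[X,V],\mu_{[X,V]}^\ast)\to([X,V],\nbhd)$ transposing $q\cdot T\ev\cdot\kappa\cdot(1\times\alpha)$ (Lemma \ref{lem:ExOfConv}), and then proves $\conv\cdot\eta_{[X,V]}=1_{[X,V]}$ and $\conv\cdot\mu_{[X,V]}=\conv\cdot T\conv$ by an order-theoretic argument using monotonicity of $\conv$, the identity $\conv\cdot\nbhd=1_{[X,V]}$ forced by uniqueness in the couniversal property, and transitivity of $\nbhd$. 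None of this is supplied by an appeal to closure under exponentials. Relatedly, your identification of the monoid structure of the exponential with ``the map $\conv$'' tacitly presupposes that $\conv$ has a right adjoint $\conv^\ast$, i.e.\ that it is already (essentially) an algebra structure---again assuming what is to be proved.

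Two smaller points. First, the theorem concerns exponentiability with respect to the box product $\xtimes$ induced by $\kappa$, not the categorical product; these coincide for $\Top$ and $\PrOrd$ but not in general (for interior spaces, Section \ref{Int}, the cartesian-exponentiable objects are only the indiscrete ones), so your systematic use of $X\times(-)$ and of ``$1$ represents the underlying-set functor for $\times$'' must be read against $E=(1,\eta_1)$ and $\xtimes$ throughout. Second, your sufficiency outline does agree with the paper's route (Proposition \ref{prop:thm:2} for the test objects, then Lemma \ref{lem:RestrictionToV} and Proposition \ref{prop:RestrictionToV} for the transfer via initial lifts), and you correctly locate the two places where the algebra hypothesis is indispensable---that $\ev$ is a homomorphism and that transposes of homomorphisms are homomorphisms---but you leave exactly those steps unexecuted. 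As written, the proposal therefore establishes neither implication.
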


The definition of the category $\Mons{\mT}$ of $\mT$-monoids is given in \ref{ssec:KlMon}, and that of a lax monoidal powerset-enriched monad in \ref{ssec:P-EnrichedMnd} and \ref{ssec:MonoidalP-Mnd}. The fact that a $\mT$-algebra can be seen as a $\mT$-monoid is recalled in Proposition \ref{prop:T-AlgToT-Mon}, and the monoidal structure induced on  $\Mons{\mT}$ is described in Section \ref{ssec:BoxProd}. The notation $[X,V]$ designates the set of all $\mT$-monoid homomorphisms $f:(X,\alpha)\to(V,q^\ast)$, and the map $\conv:T[X,V]\to[X,V]$ is defined in \ref{ssec:conv}.

We illustrate the various notions of Section \ref{sec:Defs} with the powerset and filter monads, and apply our main result to them at the end of Section \ref{sec:Exp}. In Section \ref{sec:Examples}, we give further examples.

\section{Monads and monoidal structures}\label{sec:Defs}%%%%%%%%%%%%%%%%%%%%%%%%

Classical references for the categorical concepts that we use here include \cite{AdaHerStr:90}, \cite{Bor:94a,Bor:94b} and \cite{Mac:71}. We also include a number of more specific references  in the text.

\subsection{Eilenberg--Moore algebras and sup-semilattices}\label{EMAlgsAndSup}

Let us denote by $\mP=(P,\bigcup,\{-\})$ the powerset monad on $\Set$, and suppose that $\mT$ is a monad with a monad morphism $\tau:\mP\to\mT$. Recall (for example from \cite[Proposition~4.5.9]{Bor:94b}) that $\tau$ induces a functor $\Set^\tau:\Set^\mT\to\Set^\mP$ between the corresponding Eilenberg--Moore categories: this functor commutes with the forgetful functors to $\Set$ and sends a $\mT$-algebra $(X,a)$ to a $\mP$-algebra $(X,a\cdot\tau_X)$. As
\[
\Set^\mP\cong\Sup
\]
is the category of complete sup-semilattices and sup-preserving maps (see for example~\cite[Proposition 4.6.5]{Bor:94b}), any $\mT$-algebra $(X,a)$ has an underlying complete sup-semilattice structure $a\cdot\tau_X$. It follows that a $\mT$-algebra homomorphism $f:(X,a)\to(Y,b)$ has a right adjoint $f^\ast:Y\to X$, so that
\[
1_X\le f^\ast\cdot f\dand f\cdot f^\ast\le 1_Y\ ,
\]
where $X$ and $Y$ are equipped with their induced orders $(a\cdot\tau_X)^\ast:X\to PX$ and $(b\cdot\tau_Y)^\ast:Y\to PY$ respectively, and the hom-sets $\Set(X,Y)$ are ordered pointwise. Note that a $\mT$-algebra structure $a:TX\to X$ is in particular a split epimorphism in $\Set$, so that one has 
\[
1_{TX}\le a^\ast\cdot a\dand a\cdot a^\ast=1_X\ .
\]

\begin{ex}\label{ex:FilterMonad}
The functor $F$ of the \df{filter monad} $\mF=(F,\mu,\eta)$ associates to a set $X$ the set $FX$ of filters on $X$ (that is, subsets of $PX$ that are closed under finite intersection and up-closure), and to a map $f:X\to Y$, the map $Ff:FX\to FY$ defined for all $B\subseteq Y$ and $\fx\in FX$ by
\[
B\in Ff(\fx)\iff f^\inv(B)\in\fx\ .
\]
The components of the unit and multiplication can be described for $x\in X$, $A\subseteq X$, and $\fF\in FFX$ by
\[
A\in\eta_X(x)\iff x\in A\qquad\text{and}\qquad A\in\mu_X(\fF)\iff A^\mF\in\fF\ ,
\]
where $A^\mF:=\{\fx\in FX\mid A\in\fx\}$. The Eilenberg--Moore category of $\mF$ is the category of continuous lattices with continuous sup-preserving maps \cite{Day:75}:
\[
\Set^\mF\cong\Cnt\ .
\]
There is a monad morphism $\tau:\mP\to\mF$, namely the  \df{principal filter monad morphism}, that sends $A\in PX$ to its up-closure $\tau_X(A)=\{B\subseteq X\mid A\subseteq B\}\in FX$. Via $\tau$, any $\mF$-algebra is a complete sup-semilattice, and any $\mF$-algebra homomorphism has a right adjoint. Note that $\tau$ equips the sets $FX$ with the \df{refinement order} $(\mu_X\cdot\tau_{FX})^\ast:FX\to PFX$, that is, the \emph{reverse} inclusion of filters:
\[
\fx\le\fy\iff\fx\supseteq\fy
\]
for all $\fx,\fy\in FX$.
\end{ex}

\subsection{Powerset-enriched monads}\label{ssec:P-EnrichedMnd}%%%%%%%%%%%%%%%%%%%

A \df{powerset-enriched} monad is a pair $(\mT,\tau)$ in which $\mT=(T,\mu,\eta)$ is a monad on $\Set$ and $\tau:\mP\to\mT$ is a monad morphism (from the powerset monad $\mP$) that makes the Kleisli category $\Set_\mT$ into an ordered category; since composition in $\Set_\mT$ is given by
\[
g\kleisli h=\mu_Y\cdot Tg\cdot h
\]
(for $h:Z\to TX$ and $g:X\to TY$) and the hom-sets $\Set(X,TY)$ are ordered pointwise, the condition for $(\mT,\tau)$ to be powerset-enriched is simply
\begin{equation}\label{eq:EnrichmentCond}\tag{$\ast$}
f\le g\implies\mu_Y\cdot Tf\le\mu_Y\cdot Tg
\end{equation}
for all $f,g:X\to TY$. A \df{morphism} $\theta:(\mS,\sigma)\to(\mT,\tau)$ of powerset-enriched monads is a morphism in the comma-category $(\mP\comma\Mnd_\Set)$ of monads under $\mP$; that is, the monad morphism $\theta:\mS\to\mT$ makes the diagram
\[
\xymatrix@C=10pt{&\mP\ar[dl]_\sigma\ar[dr]^\tau&\\ \mS\ar[rr]^{\theta}&&\mT}
\]
commute. When working with powerset-enriched monads $(\mT,\tau)$, we will often assume a fixed choice of $\tau$, and speak of ``the powerset-enriched monad $\mT$''.

\begin{examples}\label{ex:PMnd}
\item The powerset monad $\mP=(P,\bigcup,\{-\})$ is powerset-enriched via the identity monad morphism $1_\mP:\mP\to\mP$. Hence, $(\mP,1_\mP)$ is an initial object in the category of powerset-enriched monads and their morphisms. The order on the sets $PX$ described in \ref{EMAlgsAndSup} is simply subset inclusion suprema are given by arbitrary union.
\item The filter monad $\mF=(F,\mu,\eta)$ is powerset-enriched via the principal filter monad morphism $\tau:\mP\to\mF$ of Example \ref{ex:FilterMonad}.
\end{examples}

The next result shows that condition \eqref{eq:EnrichmentCond} trickles down to the $\mT$-algebra level.

\begin{prop}\label{prop:T-AlgStructActAsMultiplication}
Let $\mT$ be a powerset-enriched monad. For any $\mT$-algebra $(Y,b)$, and maps $f,g:X\to Y$, one has
\[
f\le g\implies b\cdot Tf\le b\cdot Tg\ .
\]
\end{prop}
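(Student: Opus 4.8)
The plan is to reduce the claimed inequality---which concerns the pointwise order on maps into the $\mT$-algebra $Y$---to the enrichment condition \eqref{eq:EnrichmentCond}, which concerns the order on Kleisli morphisms into $TY$. The bridge between the two orders is the sup-semilattice structure $b\cdot\tau_Y\colon PY\to Y$ carried by $Y$ (Section \ref{EMAlgsAndSup}), together with the fact that $\tau_Y\colon PY\to TY$ is order-preserving: being the component at $Y$ of the monad morphism $\tau\colon\mP\to\mT$, the map $\tau_Y$ is a homomorphism of $\mP$-algebras $(PY,\bigcup)\to\Set^\tau(TY,\mu_Y)$ and hence preserves suprema, so in particular $A\subseteq B$ implies $\tau_Y(A)\le\tau_Y(B)$ in the order on $TY$ induced by $\tau$.

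Granting this, the key steps are as follows. Since $b\cdot\tau_Y$ computes binary suprema in $Y$ and $f\le g$ pointwise, the map $c\colon X\to PY$, $x\mapsto\{f(x),g(x)\}$, satisfies $(b\cdot\tau_Y)\cdot c=g$. From $\{f(x)\}\subseteq\{f(x),g(x)\}$ and the monotonicity of $\tau_Y$, together with the identity $\eta_Y=\tau_Y\cdot\{-\}$ (which holds because $\tau$ is a monad morphism), I obtain $\eta_Y\cdot f\le\tau_Y\cdot c$ in $\Set(X,TY)$. Condition \eqref{eq:EnrichmentCond} then yields
\[
\mu_Y\cdot T(\eta_Y\cdot f)\le\mu_Y\cdot T(\tau_Y\cdot c)
\]
as maps $TX\to TY$.

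Finally I compose this inequality on the left with $b\colon TY\to Y$, which is order-preserving since it is a $\mT$-algebra homomorphism $(TY,\mu_Y)\to(Y,b)$ and such homomorphisms admit right adjoints (Section \ref{EMAlgsAndSup}). On the left-hand side the monad law $\mu_Y\cdot T\eta_Y=1_{TY}$ collapses $b\cdot\mu_Y\cdot T(\eta_Y\cdot f)$ to $b\cdot Tf$; on the right-hand side the algebra law $b\cdot\mu_Y=b\cdot Tb$ together with $(b\cdot\tau_Y)\cdot c=g$ collapses $b\cdot\mu_Y\cdot T(\tau_Y\cdot c)$ to $b\cdot T\big((b\cdot\tau_Y)\cdot c\big)=b\cdot Tg$. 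This gives $b\cdot Tf\le b\cdot Tg$, as desired.

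The main obstacle is the temptation to argue that $\eta_Y$ itself is order-preserving, which would let one conclude $\eta_Y\cdot f\le\eta_Y\cdot g$ and feed it directly into \eqref{eq:EnrichmentCond}; this fails in general (already for the filter monad, where $\eta_Y$ sends a point to its principal filter, $\eta_Y$ need not be order-preserving). The point of passing through the two-element set $\{f(x),g(x)\}$ at the powerset level is precisely to replace the semilattice inequality $f(x)\le g(x)$ by an honest inclusion, to which the genuinely monotone map $\tau_Y$ applies, while retaining enough information to recover $g$ after applying $b\cdot\tau_Y$.
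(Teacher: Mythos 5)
Your proof is correct, but it takes a genuinely different route from the paper's. The paper transports the inequality $f\le g$ into the Kleisli hom-order by composing with the right adjoint $b^\ast$ of $b$ (monotone, being a right adjoint between the induced orders), applies the enrichment condition \eqref{eq:EnrichmentCond} to $b^\ast\cdot f\le b^\ast\cdot g$, and then collapses $b\cdot\mu_Y\cdot T(b^\ast)$ to $b$ using $b\cdot\mu_Y=b\cdot Tb$ and $b\cdot b^\ast=1_Y$. You instead encode the pointwise inequality as an honest subset inclusion $\{f(x)\}\subseteq\{f(x),g(x)\}$, push it into $TY$ along the sup-preserving (hence monotone) map $\tau_Y$, apply \eqref{eq:EnrichmentCond} to $\eta_Y\cdot f\le\tau_Y\cdot c$, and collapse using $\mu_Y\cdot T\eta_Y=1_{TY}$ on one side and $b\cdot\mu_Y=b\cdot Tb$ together with $(b\cdot\tau_Y)\cdot c=g$ on the other; each of these steps checks out, including the justification that $\tau_Y$ is a $\mP$-algebra homomorphism $(PY,\bigcup)\to(TY,\mu_Y\cdot\tau_{TY})$ via naturality of $\tau$ and the monad-morphism law. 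Both arguments ultimately draw on the same sup-semilattice structure $b\cdot\tau_Y$ from Section \ref{EMAlgsAndSup}: the paper's version is more compact and reuses the already-established adjoint $b^\ast$, whereas your detour through $PY$ makes the role of the powerset enrichment completely explicit---the order on $Y$ is generated by inclusions pushed along $\tau$---and avoids $b^\ast$ entirely, though you still need monotonicity of $b$, which comes from the same source. Your closing remark that $\eta_Y$ need not be monotone correctly identifies the pitfall both proofs are designed to circumvent.
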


\begin{proof}
Observe that
\[
f\le g\implies \mu_Y\cdot T(b^\ast\cdot f)\le\mu_Y\cdot T(b^\ast\cdot g)\implies b\cdot\mu_Y\cdot T(b^\ast)\cdot Tf\le b\cdot \mu_Y\cdot T(b^\ast)\cdot Tg\ .
\]
The claimed result follows because the last statement is equivalent to $b\cdot Tf\le b\cdot Tg$ since $b\cdot\mu_Y=b\cdot Tb$ and $b\cdot b^\ast=1_Y$.
\end{proof}

\subsection{$\mT$-monoids}\label{ssec:KlMon}%%%%%%%%%%%%%%%%%%%%%%%%%%%%%%

Let $\mT=(T,\eta,\mu)$ be a monad powerset-enriched via $\tau:\mP\to\mT$. The category $\Mons{\mT}$ of \df{$\mT$-monoids} (or \df{Kleisli monoids}) has as objects pairs $(X,\alpha)$, with $X$ a set and $\alpha:X\to TX$ is a \df{reflexive} and \df{transitive} map:
\[
\eta_X\le\alpha\ ,\quad \alpha\kleisli \alpha\le \alpha\ ;
\]
a homomorphism $f:(X,\alpha)\to(Y,\beta)$ is a map $f:X\to Y$ satisfying
\[
Tf\cdot \alpha\le \beta\cdot f\ .
\]
In presence of the reflexivity condition, transitivity can be expressed as the equality $\alpha\kleisli\alpha=\alpha$, since
\[
\alpha=\alpha\kleisli\eta_X\le\alpha\kleisli\alpha\le\alpha\ .
\]

\begin{prop}\label{prop:T-AlgToT-Mon}
Let $\mT$ be a powerset-enriched monad. The map sending a $\mT$-algebra $(X,a)$ to the pair $(X,a^\ast)$ defines a functor $L:\Set^\mT\to\Mons{\mT}$ that commutes with the underlying-set functors.
\end{prop}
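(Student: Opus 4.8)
The plan is to base everything on the order-theoretic adjunction $a\dashv a^\ast$. As recalled in \ref{EMAlgsAndSup}, the structure map of a $\mT$-algebra $(X,a)$ is itself a $\mT$-algebra homomorphism $a:(TX,\mu_X)\to(X,a)$, and therefore admits a right adjoint $a^\ast:X\to TX$; because $a$ is split by $\eta_X$, this adjunction additionally satisfies $a\cdot a^\ast=1_X$ and $1_{TX}\le a^\ast\cdot a$. The single tool I would use repeatedly is the transposition rule obtained by lifting this adjunction along post-composition: for maps $h:Z\to TX$ and $k:Z\to X$ one has $a\cdot h\le k$ if and only if $h\le a^\ast\cdot k$, the orders being the pointwise ones induced by $\tau$. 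On $TX$ this $\tau$-induced order is precisely the free-algebra order $(\mu_X\cdot\tau_{TX})^\ast$ that appears in the definition of $\Mons{\mT}$, so the two viewpoints are compatible.

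First I would verify that $(X,a^\ast)$ is a $\mT$-monoid. Reflexivity $\eta_X\le a^\ast$ is the transpose of $a\cdot\eta_X\le 1_X$, and the latter holds with equality by the unit law $a\cdot\eta_X=1_X$. Transitivity $a^\ast\kleisli a^\ast=\mu_X\cdot T(a^\ast)\cdot a^\ast\le a^\ast$ is the transpose of $a\cdot\mu_X\cdot T(a^\ast)\cdot a^\ast\le 1_X$; to establish the latter I would use the associativity law to replace $a\cdot\mu_X$ by $a\cdot Ta$ and then contract $a\cdot Ta\cdot T(a^\ast)=a\cdot T(a\cdot a^\ast)=a\cdot T(1_X)=a$, so that the left-hand side reduces to $a\cdot a^\ast=1_X$. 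Both defining inequalities thus hold.

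Next, given a $\mT$-algebra homomorphism $f:(X,a)\to(Y,b)$, that is $f\cdot a=b\cdot Tf$, I would show that the same underlying map is a $\mT$-monoid homomorphism $f:(X,a^\ast)\to(Y,b^\ast)$, i.e.\ $Tf\cdot a^\ast\le b^\ast\cdot f$. Transposing across $b\dashv b^\ast$, this amounts to $b\cdot Tf\cdot a^\ast\le f$, and the homomorphism law rewrites the left-hand side as $f\cdot a\cdot a^\ast=f$. Since $L$ acts as the identity on underlying sets and on maps, it preserves identities and composites and commutes with the forgetful functors essentially by construction. The one point requiring care -- and the only place I anticipate any friction -- is the bookkeeping of orders flagged in the first paragraph: one must confirm that the pointwise order used in the $\mT$-monoid axioms agrees with the order for which $a$ and $b$ are left adjoints. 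Once that identification is in place, each verification is a single application of the transposition rule together with one algebra identity.
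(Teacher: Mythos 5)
Your proposal is correct and follows essentially the same route as the paper: both verify reflexivity and transitivity of $a^\ast$ by combining the adjunction $a\dashv a^\ast$ (with $a\cdot a^\ast=1_X$, $1_{TX}\le a^\ast\cdot a$) with the algebra laws $a\cdot\eta_X=1_X$ and $a\cdot\mu_X=a\cdot Ta$, and treat morphisms by the same one-line transposition. The only difference is cosmetic — you transpose across the adjunction where the paper inserts $a^\ast\cdot a$ directly — and your explicit remark that the $\tau$-induced order on $TX$ is the free-algebra order correctly addresses the one point of bookkeeping the paper leaves implicit.
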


\begin{proof}
Since $a\cdot\eta_X\le 1_X$ and $a\cdot\mu_X\le a\cdot Ta$, one has
\[
\eta_X\le a^\ast\qquad\text{and}\qquad \mu_X\cdot T(a^\ast)\cdot a^\ast\le (a^\ast\cdot a\cdot Ta)\cdot T(a^\ast)\cdot a^\ast=a^\ast\ .
\]
A similar verification shows that a morphism $f:(X,a)\to(Y,b)$ of $\mT$-algebras yields a homomorphism of $\mT$-monoids $f:(X,a^\ast)\to(Y,b^\ast)$.
\end{proof}

By the previous proposition, $(TX,\mu_X^\ast)$ is a $\mT$-monoid, and the idempotency condition makes a $\mT$-monoid structure $\alpha$ on $X$ into a homomorphism $\alpha:(X,\alpha)\to(TX,\mu_X^\ast)$; indeed,
\[
\alpha\kleisli\alpha\le\alpha\iff\mu_X\cdot T\alpha\cdot\alpha\le\alpha\iff T\alpha\cdot\alpha\le\mu_X^\ast\cdot\alpha\ .
\]

\begin{examples}
\item In the case of the powerset monad (together with its identity structure $1_\mP$), $\Mons{\mP}$ is the category of preordered sets. Indeed, a map $\alpha:X\to PX$ is precisely a relation on $X$, and the reflexivity and transitivity conditions make this relation into a preorder; $\alpha_X$ is therefore the down-set map $\dc_X:X\to PX$. A map $f:X\to Y$ is a morphism of  $\Mons{\mP}$ if and only if it preserves the relations, that is, if and only if $f$ is a monotone map. Hence,
\[
\Mons{\mP}\cong\PrOrd\ .
\]
\item When $\mF$ is equipped with the principal filter monad morphism, $\Mons{\mF}$ is the category of topological spaces and continuous maps \cite{Gae:88}:
\[
\Mons{\mF}\cong\Top\ .
\]
Indeed, a Kleisli monoid $\alpha:X\to FX$ associates to each point $x\in X$ a filter $\alpha(x)\in FX$; the elements of this filter all contain $x$ by reflexivity of $\alpha$, and transitivity translates as the axiom required of a family of filters $(\alpha(x))_{x\in X}$ to form the set of neighborhoods of the topology it determines (\cite[Proposition 1.2.2]{Bou:89}). A map $f:X\to Y$ is a morphism of  $\Mons{\mF}$ if and only if the image of the neighborhood $\alpha(x)$ of $x\in X$ is finer that the neighborhood $\beta(f(x))$ of $f(x)$, that is, if and only if $f$ continuous.
\end{examples}

\begin{prop}\label{prop:T-MonTopological}
For a powerset-enriched monad $\mT$, the forgetful functor $\Mons{\mT}\to\Set$ is topological: given a family $(Y_i,\beta_i)$  of $\mT$-monoids, the initial structure induced on $X$ by a family of maps $(f_i:X\to Y_i)_{i\in I}$ is
\[
\alpha:=\bigwedge_{i\in I}(Tf_i)^\ast\cdot\beta_i\cdot f_i\ .
\]
\end{prop}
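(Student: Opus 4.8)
The plan is to read the formula for $\alpha$ through the adjunction attached to $Tf_i$. Since $Tf_i\colon(TX,\mu_X)\to(TY_i,\mu_{Y_i})$ is a homomorphism of free $\mT$-algebras (naturality of $\mu$), the discussion in \ref{EMAlgsAndSup} provides a right adjoint $(Tf_i)^\ast$ with $Tf_i\dashv(Tf_i)^\ast$; moreover each $TX$ is a complete sup-semilattice, hence a complete lattice, so the pointwise meet defining $\alpha$ exists. The resulting transposition $Tf_i\cdot u\le v\iff u\le(Tf_i)^\ast\cdot v$ reformulates the homomorphism condition: a structure $\alpha'$ on $X$ makes $f_i\colon(X,\alpha')\to(Y_i,\beta_i)$ a homomorphism exactly when $\alpha'\le(Tf_i)^\ast\cdot\beta_i\cdot f_i$. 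Thus the proposed $\alpha$ is, tautologically, the largest structure turning every $f_i$ into a homomorphism, and this single observation organizes the whole proof.

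First I would verify that $\alpha$ is a bona fide $\mT$-monoid structure. Reflexivity is immediate: $\eta_{Y_i}\le\beta_i$ together with $\eta_{Y_i}\cdot f_i=Tf_i\cdot\eta_X$ gives $Tf_i\cdot\eta_X\le\beta_i\cdot f_i$, which transposes to $\eta_X\le(Tf_i)^\ast\cdot\beta_i\cdot f_i$ for each $i$, whence $\eta_X\le\alpha$. Transitivity is where the work lies. I would first record, for the associated Kleisli arrow $\hat f_i:=\eta_{Y_i}\cdot f_i$, the identities $Tf_i\cdot\psi=\hat f_i\kleisli\psi$ (for $\psi\colon W\to TX$) and $\beta_i\cdot f_i=\beta_i\kleisli\hat f_i$, so that being a homomorphism reads $\hat f_i\kleisli\alpha\le\beta_i\kleisli\hat f_i$. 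A short formal computation — associativity of $\kleisli$, two applications of this inequality, and transitivity $\beta_i\kleisli\beta_i\le\beta_i$ — then yields $\hat f_i\kleisli(\alpha\kleisli\alpha)\le\beta_i\kleisli\hat f_i$, i.e. $Tf_i\cdot(\alpha\kleisli\alpha)\le\beta_i\cdot f_i$; transposing and taking the meet over $i$ gives $\alpha\kleisli\alpha\le\alpha$.

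Next comes initiality. Given a $\mT$-monoid $(Z,\gamma)$ and a map $g\colon Z\to X$, one direction is formal: if $g$ is a homomorphism into $(X,\alpha)$ then each $f_i\cdot g$ is a composite of homomorphisms (the $f_i$ are homomorphisms, since $\alpha\le(Tf_i)^\ast\cdot\beta_i\cdot f_i$). For the converse, if every $f_i\cdot g$ is a homomorphism then $Tf_i\cdot Tg\cdot\gamma\le\beta_i\cdot f_i\cdot g$; transposing along $Tf_i\dashv(Tf_i)^\ast$ gives $Tg\cdot\gamma\le(Tf_i)^\ast\cdot\beta_i\cdot f_i\cdot g$ for each $i$, and since precomposition with $g$ is computed pointwise it commutes with the meet, so $Tg\cdot\gamma\le\bigl(\bigwedge_i(Tf_i)^\ast\cdot\beta_i\cdot f_i\bigr)\cdot g=\alpha\cdot g$, i.e. $g$ is a homomorphism into $(X,\alpha)$. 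Uniqueness of the lift is then automatic: two initial structures for the same source are comparable both ways via $1_X$, hence coincide in the pointwise order.

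The main obstacle I anticipate is transitivity, and specifically the monotonicity of $\kleisli$ in its \emph{right} variable that the formal computation quietly invokes. Condition \eqref{eq:EnrichmentCond} literally supplies only left-variable monotonicity; right-variable monotonicity must be argued separately, by observing that every map of the form $\mu_Y\cdot Tg$ (for $g\colon X\to TY$) is again a homomorphism of free $\mT$-algebras and therefore order-preserving. Once both monotonicities are secured — equivalently, once $\Set_\mT$ is acknowledged as an ordered category — the displayed chain of inequalities goes through verbatim, and the rest is bookkeeping.
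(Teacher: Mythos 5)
Your proof is correct and is exactly the ``direct verification'' that the paper delegates to \cite[Remark 3.11]{Sea:10}: reading the formula for $\alpha$ through the adjunctions $Tf_i\dashv(Tf_i)^\ast$ so that $\alpha$ is tautologically the largest structure making all $f_i$ homomorphisms, then checking reflexivity, transitivity and initiality by transposition. You also correctly flag the one genuine subtlety---monotonicity of Kleisli composition in the inner variable, which comes from $\mu_Y\cdot Tg$ being a free-algebra homomorphism rather than from condition \eqref{eq:EnrichmentCond} itself---so nothing is missing.
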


\begin{proof}
The claim follows from direct verifications (see \cite[Remark 3.11]{Sea:10}).
\end{proof}

\begin{examples}
\item When $\mT$ is the powerset monad $\mP$, Proposition \ref{prop:T-AlgToT-Mon} simply described the functor
\[
\Sup\cong\Set^\mP\to\Mons{\mP}\cong\PrOrd\ .
\]
that send a complete sup-semilattice to its underlying preordered set. Proposition \ref{prop:T-MonTopological} recalls that the underlying-set functor $\PrOrd\to\Set$ is topological \cite{AdaHerStr:90}.
\item In the case where $\mT$ is the filter monad $\mF$, the functor
\[
\Cnt\cong\Set^\mF\to\Mons{\mF}\cong\Top
\]
of Proposition \ref{prop:T-AlgToT-Mon} sends a continuous lattice to its underlying topological space by equipping it with the Scott topology. In turn, Proposition \ref{prop:T-MonTopological} yields the well-known fact that the forgetful functor $\Top\to\Set$ is topological.
\end{examples}

\subsection{Monoidal functors}%%%%%%%%%%%%%%%%%%%%%%%%%%%%%%%

Let us denote by
\[
\upsilon_{X,Y,Z}:X\times(Y\times Z)\to(X\times Y)\times Z\ ,\quad\lambda_X:X\to X\times 1\quad\rho_X:X\to 1\times X%\\
%\dand\zeta_{X,Y}:X\times Y\to Y\times X
\]
the associativity and unitality natural isomorphisms that form  the cartesian structure of $\Set$. Consider a     functor $T:\Set\to\Set$, a map $\eta_1:1\to T1$ (with $1=\{\star\}$ a singleton), and a family of maps
\[
\kappa=(\kappa_{X,Y}:TX\times TY\to T(X\times Y))_{X,Y\in\ob\Set}
\]
natural in $X$ and $Y$. The functor $T$ is \df{monoidal} (with respect to $(\eta_1,\kappa)$) if it is compatible with the cartesian structure of $\Set$ as follows:

\begin{enumerate}[label=$(M_{\arabic{*}})$]%,ref=$M_{\arabic{*}}$]
\item\label{cond:0} one has $\kappa_{X\times Y,Z}\cdot(\kappa_{X,Y}\times 1_{TZ})\cdot\upsilon_{TX,TY,TZ}=T\upsilon_{X,Y,Z}\cdot\kappa_{X,Y\times Z}\cdot(1_{TX}\times\kappa_{Y,Z})$:
\[
\xymatrix{TX\times(TY\times TZ)\ar[d]_{\upsilon}\ar[r]^-{1\times\kappa}&TX\times T(Y\times Z)\ar[r]^-{\kappa}&T(X\times(Y\times Z))\ar[d]^{T\upsilon}\\
(TX\times TY)\times TZ\ar[r]^-{\kappa\times 1}&T(X\times Y)\times TZ\ar[r]^-{\kappa}&T((X\times Y)\times Z)\rlap{\ ;}}
\]
\item\label{cond:1} one has $\kappa_{X,1}\cdot(1_{TX}\times\eta_1)\cdot\lambda_{TX}=T\lambda_X$ and $\kappa_{1,X}\cdot(\eta_1\times1_{TX})\cdot\rho_{TX}=T\rho_X$:
\[
\xymatrix@C=2.5em{&TX\ar[dl]_{\lambda}\ar[dr]^{T\lambda}&\\
TX\times 1\ar[r]^-{1\times\eta_1}&TX\times T1\ar[r]^-{\kappa}&T(X\times 1)}
\xymatrix{\mbox{}\ar@{}[d]|{\textstyle{\text{and}}}\\
\mbox{}}
\xymatrix@C=2.5em{&TX\ar[dl]_{\rho}\ar[dr]^{T\rho}&\\
1\times TX\ar[r]^-{\eta_1\times1}&T1\times TX\ar[r]^-{\kappa}&T(1\times X)\rlap{\ .}}
\]
\end{enumerate}

\begin{examples}\label{ex:MonoidalTransformation}
\item\label{ex:MonoidalTransformation:P} There is a natural transformation $\pi=(\pi_{X,Y}:PX\times PY\to P(X\times Y))_{X,Y\in\Set}$ associated to the powerset functor $P:\Set\to\Set$  given by
\[
\pi_{X,Y}(A,B)=A\times B
\]
(for all $A\in PX$ and $B\in PY$). The pair $(P,\{-\}_1)$ is a monoidal functor (where $\{-\}_1:1\to P1$ is the singleton map).
\item The product of the filters $\fx\in FX$ and $\fy\in FY$ is defined by
\[
\fx\times\fy:=\{C\subseteq X\times Y\mid\exists A\in\fx,B\in\fy\ (A\times B\subseteq C)\}\in F(X\times Y)\ .
\]
One obtains the \df{product filter} natural transformation $\kappa:F(-)\times F(-)\to F(-\times -)$ by setting
\[
\kappa_{X,Y}(\fx,\fy)=\fx\times \fy
\]
(for all $\fx\in FX$ and $\fy\in FY$). The pair $(F,\eta_1)$ then forms a monoidal functor (where $\eta_1:1\to F1$ is the principal ultrafilter map).
\end{examples}

\subsection{Lax monoidal monads}\label{ssec:MonoidalP-Mnd}

A powerset-enriched monad $\mT$ with a natural transformation $\kappa:T(-)\times T(-)\to T(-\times -)$ is \df{lax monoidal} if $T$ is monoidal with respect to $(\eta_1,\kappa)$ and $\kappa$ is laxly compatible with the monad structures as follows:

\begin{enumerate}[label=$(M_{\arabic{*}})$]\setcounter{enumi}{2}
\item\label{cond:unit} one has $\eta_{X,Y}\le\kappa_{X,Y}\cdot(\eta_X\times\eta_Y)$:
\[
\xymatrix{&X\times Y\ar[dl]_{\eta\times\eta}\ar[dr]^{\eta}\ar@{}[d]|-{\ge}&\\
TX\times TY\ar[rr]^-{\kappa}&&T(X\times Y)\rlap{\ ;}}
\]
\item\label{cond:3} one has $\mu_{X\times Y}\cdot T\kappa_{X,Y}\cdot\kappa_{TX,TY}\le\kappa_{X,Y}\cdot(\mu_X\times\mu_Y)$:
\[ 
\xymatrix@C=4em{TTX\times TTY\ar@{}[dr]|-{\ge}\ar[r]^-{T\kappa\cdot\kappa}\ar[d]_-{\mu\times\mu}&TT(X\times Y)\ar[d]^-{\mu}\\
TX\times TY\ar[r]^{\kappa}&T(X\times Y)\rlap{\ ;}}
\]
\item\label{cond:enr} one has $\tau_X\cdot\pi_{X,Y}\le\kappa_{X,Y}\cdot(\tau_X\times\tau_Y)$:
\[ 
\xymatrix@C=4em{PX\times PY\ar@{}[dr]|-{\ge}\ar[r]^-{\pi}\ar[d]_-{\tau\times\tau}&P(X\times Y)\ar[d]^-{\tau}\\
TX\times TY\ar[r]^{\kappa}&T(X\times Y)\rlap{\ ,}}
\]
were $\pi:P(-)\times P(-)\to P(-\times -)$ is the natural transformation associated to the powerset monad (Example \ref{ex:MonoidalTransformation}\ref{ex:MonoidalTransformation:P}).
\end{enumerate}
When referring to a lax monoidal powerset-enriched monad $\mT$, we will always assume that the natural transformation $\kappa$ is given together with $\tau$. Hence, a lax monoidal powerset-enriched monad $\mT$ is in effect a triple $(\mT,\tau,\kappa)$ with $\tau:P\to T$ and $\kappa:T(-)\times T(-)\to T(-\times -)$ suitable natural transformations.

Note that conditions \ref{cond:3} with \ref{cond:enr} yield
\begin{align*}
\mu_X\cdot \tau_{TX}\cdot P\kappa_X\cdot\pi_{TX,TY}&=\mu_X\cdot T\kappa_X\cdot\tau_{TX}\cdot\pi_{TX,TY}\\
&\le\mu_X\cdot T\kappa_X\cdot\kappa_{TX,TY}\cdot(\tau_{TX}\times\tau_{TY})\\
&\le\kappa_{X,Y}\cdot(\mu_X\times\mu_Y)\cdot(\tau_{TX}\times\tau_{TY})=\kappa_{X,Y}\cdot((\mu_X\cdot\tau_{TX})\times(\mu_Y\cdot\tau_{TY}))\ .
\end{align*}
By adjunction, one obtains the diagram
\[ 
\xymatrix@C=4em{PTX\times PTY\ar@{}[dr]|-{\le}\ar[r]^-{P\kappa\cdot\pi}&PT(X\times Y)\\
TX\times TY\ar[u]^{(\mu\cdot\tau)^\ast\times(\mu\cdot\tau)^\ast}\ar[r]^{\kappa}&T(X\times Y)\ar[u]_{(\mu\cdot\tau)^\ast}\ ;}
\]
Since $(\mu_X\cdot\tau_X)^\ast:TX\to PTX$ is the down-set map associated to the order induced by $\tau$ on $TX$, this diagram means that $\kappa$ preserves the order induced by $\tau$ in each variable:
\[
(\fx,\fy)\le(\fx',\fy')\implies\kappa_{X,Y}(\fx,\fy)\le\kappa_{X,Y}(\fx',\fy')
\]
for all $\fx,\fx'\in TX$, $\fy,\fy'\in TY$.

\begin{examples}\label{ex:LaxMonoidalMonad}
\item\label{ex:LaxMonoidalMonad:P} The powerset monad $\mP$ together with its powerset-enrichment $1_\mP:\mP\to\mP$ and the natural transformation $\pi:P(-)\times P(-)\to P(-\times -)$ of Example \ref{ex:MonoidalTransformation}\ref{ex:MonoidalTransformation:P} is a lax monoidal powerset-enriched monad.
\item The filter monad $\mF$ with the principal filter monad morphism $\tau:\mP\to\mF$ and the product filter natural transformation $\kappa:F(-)\times F(-)\to F(-\times -)$ is a lax monoidal powerset-enriched monad.
\end{examples}

%\begin{rem}
%Instead of requiring that $\kappa_{X,Y}$ be monotone in each variable, it would be natural to ask that the diagram
%\[ 
%\xymatrix@C=4em{PTX\times PTY\ar[r]^-{P\kappa\cdot\pi}\ar[d]_{(\mu\cdot\tau)\times(\mu\cdot\tau)}&PT(X\times Y)\ar[d]^{\mu\cdot\tau}\\
%TX\times TY\ar[r]^{\kappa}&T(X\times Y)}
%\]
%commutes, that is, $\kappa_{X,Y}$ preserves suprema in each variable. However, this is not reasonable in our setting, as the filter monad does not satisfy this condition.
%
%Indeed, consider the sets $X=Y=\mathbf{N}$ (where $\mathbf{N}$ denotes the set of natural numbers). Let $\fx\in FX$ be the filter on $\mathbf{N}$ with basis $A_i:=\{n\in X\mid i\le n\}$ ($i\in\mathbf{N}$), and $\fy_j\in FY$ the principal ultrafilter on $j\in\mathbf{N}$. On one hand, the set $S:=\bigcup_{j\in\mathbb{N}}A_{j}\times\{j\}\subseteq X\times Y$ is an element of $\bigvee_{j\in\mathbf{N}}(\fx\times\fy_j)$. On the other hand, $\bigvee_{j\in\mathbf{N}}\fy_j=Y$, so the filter $\fx\times\bigvee_{j\in\mathbf{N}}\fy_j$ is spanned by elements of the form $A_i\times Y$ with $A_i\in\fx$; hence, none of these is a subset of $S$, and $S\notin\fx\times\bigvee_{j\in\mathbf{N}}\fy_j$.
%\end{rem}

\subsection{The box products of $\mT$-monoids}\label{ssec:BoxProd}%%%%%%%%%%%%%%%%
Let $\mT$ be a lax monoidal powerset-enriched monad. The \df{box product} $(X,\alpha)\xtimes(Y,\beta)$ of the $\mT$-monoids $(X,\alpha)$ and $(Y,\beta)$ is obtained by equipping the set $X\times Y$ with the structure $\alpha\xtimes\beta:X\times Y\to T(X\times Y)$ defined by
\[
\alpha\xtimes\beta:=\kappa_{X,Y}\cdot(\alpha\times\beta)\ .
\]

\begin{examples}
\item For the powerset-enriched monoidal powerset monad $\mP$, the box product of preordered sets $(X,\dc_X)$ and $(Y,\dc_Y)$ is the relation on $X\times Y$ that sends a pair $(x,y)\in X\times Y$ to the set
\[
\pi_{X,Y}\cdot(\dc_X\times\dc_Y)(x,y)=\dc_X x\times\dc_Y y\ .
\]
In other words,
\[
(x',y')\le (x,y)\iff x\le x'\ \&\ y\le y'
\]
(for all $x,x'\in X$, $y,y'\in Y$), that is, the box product of preordered sets is their ordinary product.
\item For the powerset-enriched monoidal filter monad $\mF$, the box product of topological spaces $(X,\alpha)$ and $(Y,\beta)$ is given by the topology on $X\times Y$ that associates to a point $(x,y)\in X\times Y$ their product neighborhood filter
\[
\kappa_{X,Y}\cdot(\alpha\times\beta)(x,y)=\alpha(x)\times\alpha(y)\ .
\]
Hence, the box product of topological spaces is their ordinary product. In Section \ref{sec:Examples}, we will present categories of $\mT$-monoids for which the box product is not the categorical product.
\end{examples}

\begin{lem}
The box product of $\mT$-monoids defines a functor
\[
(-)\xtimes(-):\Mons{\mT}\times\Mons{\mT}\to\Mons{\mT}
\]
that commutes with the underlying-set functors.
\end{lem}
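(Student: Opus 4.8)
The statement bundles three assertions, and the plan is to dispatch the formal one first and then concentrate on the two inequalities that carry real content. On underlying sets the box product is $X\times Y$ and on underlying maps it is $f\times g$, so once we know that $(-)\xtimes(-)$ is well defined on objects and morphisms, preservation of identities and composites is inherited verbatim from the cartesian product on $\Set$, and commutation with the underlying-set functors is immediate from the construction. Thus the work reduces to checking (i) that $\alpha\xtimes\beta=\kappa_{X,Y}\cdot(\alpha\times\beta)$ is reflexive and transitive, and (ii) that $f\times g$ is a homomorphism whenever $f$ and $g$ are. All three verifications will draw on the same toolkit: naturality of $\kappa$, the lax conditions \ref{cond:unit} and \ref{cond:3}, and the monotonicity of $\kappa$ in each variable established at the close of \ref{ssec:MonoidalP-Mnd}.

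For reflexivity I would simply chain
\[
\eta_{X\times Y}\le\kappa_{X,Y}\cdot(\eta_X\times\eta_Y)\le\kappa_{X,Y}\cdot(\alpha\times\beta)=\alpha\xtimes\beta,
\]
the first inequality being exactly \ref{cond:unit} and the second following from $\eta_X\le\alpha$ and $\eta_Y\le\beta$ together with the monotonicity of $\kappa$.

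Transitivity is the heart of the argument and the step I expect to cost the most care. Expanding the Kleisli composite gives
\[
(\alpha\xtimes\beta)\kleisli(\alpha\xtimes\beta)=\mu_{X\times Y}\cdot T\kappa_{X,Y}\cdot T(\alpha\times\beta)\cdot\kappa_{X,Y}\cdot(\alpha\times\beta),
\]
and the decisive manoeuvre is to apply naturality of $\kappa$ to the maps $\alpha\colon X\to TX$ and $\beta\colon Y\to TY$, rewriting $T(\alpha\times\beta)\cdot\kappa_{X,Y}=\kappa_{TX,TY}\cdot(T\alpha\times T\beta)$. This exposes the composite $\mu_{X\times Y}\cdot T\kappa_{X,Y}\cdot\kappa_{TX,TY}$, which \ref{cond:3} bounds above by $\kappa_{X,Y}\cdot(\mu_X\times\mu_Y)$; since right-composition with a fixed map preserves the pointwise order, this yields the upper bound $\kappa_{X,Y}\cdot((\alpha\kleisli\alpha)\times(\beta\kleisli\beta))$, and a final use of monotonicity of $\kappa$ with $\alpha\kleisli\alpha\le\alpha$, $\beta\kleisli\beta\le\beta$ returns $\alpha\xtimes\beta$. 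The delicate bookkeeping here is keeping straight where naturality is invoked versus where the lax inequality \ref{cond:3} is invoked, and confirming that the intermediate composites are precisely the ones these conditions govern.

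For the action on morphisms, given homomorphisms $f$ and $g$ I would again invoke naturality of $\kappa$ to rewrite $T(f\times g)\cdot\kappa_{X,Y}=\kappa_{X',Y'}\cdot(Tf\times Tg)$, and then feed in the defining inequalities $Tf\cdot\alpha\le\alpha'\cdot f$ and $Tg\cdot\beta\le\beta'\cdot g$ through the monotonicity of $\kappa$, concluding $T(f\times g)\cdot(\alpha\xtimes\beta)\le(\alpha'\xtimes\beta')\cdot(f\times g)$. Assembling these verifications produces the functor, whose compatibility with the forgetful functors is then read off directly.
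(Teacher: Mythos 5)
Your proof is correct, and on the morphism side it coincides exactly with the paper's argument: the paper's entire proof consists of the single chain $T(f\times g)\cdot\kappa_{X,Y}\cdot(\alpha\times\beta)=\kappa_{X',Y'}\cdot(Tf\times Tg)\cdot(\alpha\times\beta)\le\kappa_{X',Y'}\cdot(\alpha'\times\beta')\cdot(f\times g)$, justified by naturality and monotonicity of $\kappa$, just as in your last paragraph. Where you go further is in checking that $\alpha\xtimes\beta$ is actually a $\mT$-monoid structure: the paper silently takes this for granted (neither the definition in the box-product subsection nor the proof of the lemma verifies reflexivity or transitivity). Your verification is the right one --- reflexivity from \ref{cond:unit} plus monotonicity, and transitivity by expanding $(\alpha\xtimes\beta)\kleisli(\alpha\xtimes\beta)$, using naturality of $\kappa$ to rewrite $T(\alpha\times\beta)\cdot\kappa_{X,Y}$ as $\kappa_{TX,TY}\cdot(T\alpha\times T\beta)$, bounding $\mu_{X\times Y}\cdot T\kappa_{X,Y}\cdot\kappa_{TX,TY}$ by $\kappa_{X,Y}\cdot(\mu_X\times\mu_Y)$ via \ref{cond:3}, and finishing with monotonicity of $\kappa$ in each variable (which the paper does establish at the end of its discussion of lax monoidal monads, so your appeal to it is legitimate). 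So your write-up is a strictly more complete version of the paper's proof rather than a different route; the only cost is length, and the benefit is that the object assignment genuinely lands in $\Mons{\mT}$, which the lemma as stated requires.
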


\begin{proof}
For $\mT$-monoid morphisms $f:(X,\alpha)\to(X',\alpha,)$, $g:(Y,\beta)\to(Y',\beta')$, we set $f\xtimes g=f\times g$. Since
\[
T(f\times g)\cdot\kappa_{X,Y}\cdot(\alpha\times\beta)=\kappa_{X',Y'}\cdot(Tf\times Tg)\cdot(\alpha\times\beta)\le \kappa_{X',Y'}\cdot(\alpha'\times\beta')\cdot(f\times g)
\]
by naturality and monotonicity of $\kappa$, $(-)\xtimes(-)$ is well-defined, and is therefore a functor.
\end{proof}

\begin{prop}
Let $\mT$ be a lax monoidal powerset-enriched monad. With $E:=(1,\eta_1)$, the triple $(\Mons{\mT},\xtimes,E)$ is a monoidal category whose underlying structure maps are those of the cartesian structure of $\Set$.
\end{prop}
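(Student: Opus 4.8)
The plan is to verify the three axioms of a monoidal category---the associativity pentagon, the two unit triangles, and the coherence that the unitors and associator are natural isomorphisms---for the structure $(\Mons{\mT},\xtimes,E)$, transporting each along the forgetful functor to $\Set$. The key observation, which makes the whole argument tractable, is that the box product $(-)\xtimes(-)$ commutes with the underlying-set functors (by the preceding Lemma), and likewise $E=(1,\eta_1)$ has underlying set the terminal object $1$ of $\Set$. Consequently, the candidate structure isomorphisms on $\Mons{\mT}$ will be precisely the cartesian associator $\upsilon$ and unitors $\lambda,\rho$ of $\Set$; these are already isomorphisms of sets, so the coherence diagrams (pentagon and triangle) will hold automatically \emph{as equations of maps} once we know that $\upsilon$, $\lambda$, $\rho$ are morphisms of $\Mons{\mT}$ and that their inverses are too.

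First I would exhibit the associator: I claim $\upsilon_{X,Y,Z}:(X\times(Y\times Z),\,\alpha\xtimes(\beta\xtimes\gamma))\to((X\times Y)\times Z,\,(\alpha\xtimes\beta)\xtimes\gamma)$ is a $\mT$-monoid homomorphism, with $\upsilon^\inv$ the homomorphism in the reverse direction. Unfolding the box-product definition, this amounts to the commutativity (up to $\le$, in fact an equality) of a diagram built from iterated $\kappa$'s, and this is exactly what the monoidal-functor axiom \ref{cond:0} supplies: \ref{cond:0} says the two ways of assembling $\kappa_{X,Y},\kappa_{-,Z}$ across $\upsilon$ agree, which translates directly into $T\upsilon\cdot(\alpha\xtimes(\beta\xtimes\gamma))=((\alpha\xtimes\beta)\xtimes\gamma)\cdot\upsilon$. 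Since $\upsilon$ is a set-bijection and the relation is an equality, $\upsilon^\inv$ is a homomorphism by the same token. Next I would treat the unitors: $\lambda_X:(X,\alpha)\to(X\times 1,\alpha\xtimes\eta_1)$ and $\rho_X:(X,\alpha)\to(1\times X,\eta_1\xtimes\alpha)$, together with their inverses, are homomorphisms---this is precisely the content of axiom \ref{cond:1}, which identifies $\kappa_{X,1}\cdot(1\times\eta_1)\cdot\lambda$ and $\kappa_{1,X}\cdot(\eta_1\times1)\cdot\rho$ with $T\lambda$ and $T\rho$. Here one uses that the unit object is $E=(1,\eta_1)$, so that the structure on the terminal set is exactly the reflexivity witness $\eta_1$, matching the $\eta_1$ appearing in \ref{cond:1}. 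The reflexivity of $\eta_1$ as a $\mT$-monoid structure is immediate from $\eta_1\le\eta_1$ and the Kleisli identity $\eta_1\kleisli\eta_1=\eta_1$.

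Once $\upsilon,\lambda,\rho$ and their inverses are known to be homomorphisms, naturality of $\upsilon,\lambda,\rho$ as families follows because $f\xtimes g$ has the underlying map $f\times g$, so the naturality squares in $\Mons{\mT}$ are the naturality squares of the cartesian structure in $\Set$, which already commute; and the pentagon and triangle identities hold in $\Mons{\mT}$ because they hold in $\Set$ and the forgetful functor is faithful. I expect the main obstacle to be bookkeeping rather than conceptual: the axioms \ref{cond:0} and \ref{cond:1} are stated for the bare monoidal functor $T$ with respect to $(\eta_1,\kappa)$, and I must check that composing these identities with the reflexive-transitive structures $\alpha,\beta,\gamma$ does not introduce strict inequalities that block the needed equalities. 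In fact, because \ref{cond:0} and \ref{cond:1} are stated as genuine equalities (not merely $\le$), the transported coherence diagrams are equalities as well, and no appeal to the monotonicity conditions \ref{cond:unit}--\ref{cond:enr} is required for coherence---those axioms were already consumed in the preceding Lemma to guarantee that $(-)\xtimes(-)$ is a well-defined functor. Thus the only real care lies in orienting each $\upsilon,\lambda,\rho$ correctly as a map between the specified $\mT$-monoids and confirming that both it and its set-theoretic inverse satisfy the homomorphism inequality, which here collapses to the equalities furnished by \ref{cond:0} and \ref{cond:1}.
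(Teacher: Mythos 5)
Your proposal is correct and follows essentially the same route as the paper: the pentagon and triangle identities are inherited from the cartesian structure of $\Set$ via the faithful underlying-set functor, so the only substantive task is to show that the components of $\upsilon$, $\lambda$, $\rho$ are isomorphisms of $\mT$-monoids, which the paper likewise does by unfolding the box products and invoking the equalities \ref{cond:0} and \ref{cond:1}. Your observation that the homomorphism inequalities collapse to genuine equalities (so the set-theoretic inverses are automatically homomorphisms) is exactly the mechanism in the paper's computation.
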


\begin{proof}
It suffices to prove that the components of the natural transformations $\upsilon$, $\lambda$ and $\rho$ are isomorphisms of $\mT$-monoids. Thus, let $(X,\alpha)$, $(Y,\beta)$ and $(Z,\gamma)$ be $\mT$-monoids. With the use of \ref{cond:0}, the equalities
\begin{align*}
&T\upsilon_{X,Y,Z}\cdot\kappa_{X,Y\times Z}\cdot(\alpha\times(\kappa_{Y,Z}\cdot(\beta\times\gamma)))\\
&=T\upsilon_{X,Y,Z}\cdot\kappa_{X,Y\times Z}\cdot(1_{TX}\times\kappa_{Y,Z})\cdot(\alpha\times(\beta\times\gamma))\\
&=\kappa_{X\times Y,Z}\cdot(\kappa_{X,Y}\times 1_{TZ})\cdot\upsilon_{TX,TY,TZ}\cdot(\alpha\times(\beta\times\gamma))\\
&=\kappa_{X\times Y,Z}\cdot(\kappa_{X,Y}\times 1_{TZ})\cdot((\alpha\times\beta)\times\gamma)\cdot\upsilon_{X,Y,Z}\\
&=\kappa_{X\times Y,Z}\cdot((\kappa_{X,Y}\cdot(\alpha\times\beta))\times\gamma)\cdot\upsilon_{X,Y,Z}
\end{align*}
show that the associativity map $\upsilon_{X,Y,Z}$ is a $\mT$-monoid isomorphism. Similarly, \ref{cond:1} yields that $\lambda_X:X\to 1\times X$ and $\rho_X:X\to X\times 1$ are isomorphisms.
\end{proof}

\section{Exponentiability}\label{sec:Exp}%%%%%%%%%%%%%%%%%%%%%%%%%%%%%%%%%%

\subsection{Exponentiable $\mT$-monoids}%%%%%%%%%%%%%%%%%%%%%%%%%%%%%%%%
Given a lax monoidal powerset-enriched monad $\mT$, we say that a $\mT$-monoid $(X,\alpha)$ is \df{exponentiable} if the functor $(-)\xtimes X:\Mons{\mT}\to\Mons{\mT}$ has a right adjoint $G_X$. In Lemma \ref{lem:UnderlyingExponential}, we prove that the underlying set of an \df{exponential} $G_X Y$ can be identified with the set
\[
[X,Y]:=\Mons{\mT}(X,Y)
\]
of all $\mT$-monoid homomorphisms $f:X\to Y$ (with $(Y,\beta)$ a $\mT$-monoid). Hence, we will often write $[X,-]$ in lieu of $G_X$ (with the $\mT$-monoid structure $\nbhd:[X,Y]\to T[X,Y]$ tacitly assumed when necessary). To further simplify notations, we write $Y^X$ for the hom-set $\Set(X,Y)$, so the counit of the adjunction
\[
((-)\times X)\dashv(-)^X:\Set\to\Set
\]
is given by the evaluation maps
\[
\ev_{X,Y}:Y^X\times X\to Y
\]
(for all $X,Y\in\ob\Set$). We also denote by $\ev_{X,Y}$ the restriction $\ev_{X,Y}:[X,Y]\times X\to Y$.

Note that we explicitly study the right adjoint of $(-)\xtimes X$ only: the right adjoint of $X\xtimes(-)$ can be obtained \textit{mutatis mutandis}, once the case for $(-)\xtimes X$ has been elucidated. 

\subsection{Exponential $\mT$-monoids are $\mT$-algebras}%%%%%%%%%%%%%%%%%%%%%%
%The aim of this section is to prove that the exponentiable objects $(X,\alpha)$ in $(\Mons{\mT},E,\xtimes)$ have exponentials of the form $([X,V],\nbhd)$ whenever $(V,q)$ is a $\mT$-algebra.

\begin{lem}\label{lem:UnderlyingExponential}
Let $\mT$ be a lax monoidal powerset-enriched monad, and $E=(1,\eta_1)$. There is a natural bijection between $[E,-]$ and the underlying-set functor $|-|:\Mons{\mT}\to\Set$.

In particular, for an exponentiable $\mT$-monoid $(X,\alpha)$, the corresponding exponentials can be considered to be sets of the form $[X,Y]$ with a $\mT$-monoid structure $\nbhd:[X,Y]\to T[X,Y]$.
\end{lem}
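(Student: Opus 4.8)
The plan is to prove the statement in two parts corresponding to the two assertions. For the natural bijection between $[E,-]$ and the underlying-set functor, I would first unpack what $[E,Y] = \Mons{\mT}(E,Y)$ is, where $E = (1,\eta_1)$. A $\mT$-monoid homomorphism $f\colon(1,\eta_1)\to(Y,\beta)$ is a map $f\colon 1\to Y$, equivalently an element $y = f(\star)\in Y$, satisfying the homomorphism condition $Tf\cdot\eta_1\le\beta\cdot f$. The key step is to show this homomorphism condition holds \emph{automatically} for every element $y\in Y$, so that $[E,Y]$ is in natural bijection with $|Y|$. By naturality of $\eta$, we have $Tf\cdot\eta_1 = \eta_Y\cdot f$, so the condition reads $\eta_Y\cdot f\le\beta\cdot f$; this follows directly from the reflexivity condition $\eta_Y\le\beta$ of the $\mT$-monoid $(Y,\beta)$ postcomposed with $f$. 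Thus $f\mapsto f(\star)$ gives the bijection, and naturality in $Y$ is immediate since for a homomorphism $g\colon(Y,\beta)\to(Y',\beta')$ the induced map $[E,g]$ is just postcomposition, which corresponds to applying $|g|$ to the chosen element.

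For the second assertion, I would invoke the functor $(-)\xtimes X\dashv G_X$ from the definition of exponentiability. The right adjoint $G_X$ is determined up to isomorphism, and I would compute its underlying set by exploiting the unit $E$ as a generator via the first part. Specifically, applying $|-|\cong[E,-]$ to the object $G_X Y$ and using the adjunction together with the fact (from the preceding Proposition) that $E$ is the monoidal unit, one computes
\[
|G_X Y|\cong\Mons{\mT}(E,G_X Y)\cong\Mons{\mT}(E\xtimes X, Y)\cong\Mons{\mT}(X,Y) = [X,Y]\,,
\]
where the middle isomorphism is the adjunction and the third uses that $E\xtimes X\cong X$ (the left unitor is a $\mT$-monoid isomorphism). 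This identifies the underlying set of the exponential with $[X,Y]$, and the $\mT$-monoid structure it carries, which I would denote $\nbhd\colon[X,Y]\to T[X,Y]$, is then transported along this bijection from $G_X Y$.

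The main obstacle I anticipate is making the chain of isomorphisms above genuinely natural and checking that the identification respects the $\mT$-monoid structure rather than merely the underlying sets—that is, ensuring the bijection $|G_X Y|\cong[X,Y]$ is induced by an actual isomorphism of $\mT$-monoids under which the structure map of $G_X Y$ transports to a well-defined map $\nbhd$ on $[X,Y]$. This is essentially automatic from the representability argument, but stating it carefully requires confirming that $E\xtimes(-)\cong\mathrm{id}$ as functors (via the unitor $\rho$ shown to be a $\mT$-monoid isomorphism in the preceding Proposition) and that the Yoneda-type identification of underlying sets via $[E,-]$ is compatible with the adjunction isomorphism. Since all the required compatibilities have been established earlier—the monoidal structure on $\Mons{\mT}$, the isomorphism status of the unitors, and the bijection $|-|\cong[E,-]$ just proved—the argument should go through cleanly, with the only real care needed in tracking that the explicit formula $[X,Y]=\Mons{\mT}(X,Y)$ emerges from the unitor isomorphism rather than from some arbitrary choice.
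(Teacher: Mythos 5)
Your proof is correct and follows essentially the same route as the paper: the first part is the observation that the homomorphism condition for a map $1\to Y$ reduces via naturality of $\eta$ to reflexivity of $\beta$, and the second part is the same chain of natural bijections $|G_XY|\cong[E,G_XY]\cong[E\xtimes X,Y]\cong[X,Y]$ using the adjunction and the unitor isomorphism. The extra care you flag about transporting the $\mT$-monoid structure along the bijection is handled just as tersely in the paper ("one can always replace $|G_XY|$ by $[X,Y]$"), so there is nothing missing.
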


\begin{proof}
Any map $x:1\to X$ is a homomorphism $x:(1,\eta_1)\to(X,\alpha)$:
\[
Tx\cdot\eta_1=\eta_X\cdot x\le\alpha\cdot x\ ,
\]
so that elements of $X$ are in bijective correspondence with elements of $[E,X]$: $[E,X]\cong X^1\cong X$. It is also clear that the bijection $[E,X]\to X$ is natural in $X$.

Suppose that $(X,\alpha)$ is exponentiable in $\Mons{\mT}$, and denote by $G_X$ the right adjoint to $(-)\xtimes X:\Mons{\mT}\to\Mons{\mT}$. By the previous point, there are bijections
\[
[X,Y]\cong[E\xtimes X,Y]\cong[E,G_X Y]\cong|G_X Y|
\]
that are natural in $Y$, so that one can always replace $|G_X Y|$ by $[X,Y]$. 
\end{proof}

\begin{lem}\label{lem:ExOfConv}
Consider a lax monoidal powerset-enriched monad $\mT$, and a $\mT$-algebra $(V,q)$. Let $(X,\alpha)$ be  an exponentiable $\mT$-monoid, and $([X,V],\nbhd)$ denote the $G_X$-image of the $\mT$-monoid $(V,q^\ast)$. Then there is a unique $\mT$-monoid homomorphism $\conv:(T[X,V],\mu_{[X,V]}^\ast)\to([X,V],\nbhd)$ that makes the following diagram commute:
\[
\xymatrix@C=30pt{[X,V]\xtimes X\ar[r]^-{\ev}&V\\
T[X,V]\xtimes X\ar[u]^{\conv\xtimes 1_X}\ar[ur]<-1pt>_(0.6){\quad q \cdot T\ev\cdot\kappa\cdot(1\times\alpha)}&}
\]
\end{lem}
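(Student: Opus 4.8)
The plan is to realize $\conv$ as an adjoint transpose, reducing the whole statement to a single homomorphism check. Write $W:=[X,V]$. Since $([X,V],\nbhd)=G_X(V,q^\ast)$ and both $\xtimes$ and $G_X$ commute with the underlying-set functors, the evaluation map $\ev\colon(W,\nbhd)\xtimes(X,\alpha)\to(V,q^\ast)$ is the counit of the exponential adjunction $(-)\xtimes X\dashv G_X$. Consequently, commutativity of the displayed diagram is exactly the relation $\ev\cdot(\conv\xtimes 1_X)=g$ that characterizes the transpose of
\[
g:=q\cdot T\ev\cdot\kappa_{W,X}\cdot(1_{TW}\times\alpha)\colon TW\times X\to V
\]
across that adjunction. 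Hence both existence and uniqueness of the homomorphism $\conv\colon(TW,\mu_W^\ast)\to(W,\nbhd)$ follow at once from the universal property, \emph{provided} I first verify that $g$ itself is a $\mT$-monoid homomorphism $(TW,\mu_W^\ast)\xtimes(X,\alpha)\to(V,q^\ast)$.

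The heart of the argument is therefore this homomorphism check, which I would carry out by exhibiting $g$ as a composite of four homomorphisms, read right to left. First, $1_{TW}\times\alpha$ is a box-product homomorphism $(TW,\mu_W^\ast)\xtimes(X,\alpha)\to(TW,\mu_W^\ast)\xtimes(TX,\mu_X^\ast)$, by functoriality of $\xtimes$ together with the fact (noted in \ref{ssec:KlMon}) that $\alpha\colon(X,\alpha)\to(TX,\mu_X^\ast)$ is a homomorphism. Next, $T\ev$ is a homomorphism $(T(W\times X),\mu_{W\times X}^\ast)\to(TV,\mu_V^\ast)$; in fact, for \emph{any} map $f\colon A\to B$ the map $Tf$ is a homomorphism between the free structures $(TA,\mu_A^\ast)\to(TB,\mu_B^\ast)$, which I would check by transposing the required inequality $TTf\cdot\mu_A^\ast\le\mu_B^\ast\cdot Tf$ across $\mu_B\dashv\mu_B^\ast$ and using naturality of $\mu$ together with $\mu_A\cdot\mu_A^\ast=1$. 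Finally, $q\colon(TV,\mu_V^\ast)\to(V,q^\ast)$ is a homomorphism by Proposition \ref{prop:T-AlgToT-Mon}, since $q$ is a $\mT$-algebra homomorphism $(TV,\mu_V)\to(V,q)$.

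The remaining factor, which I expect to be the main obstacle, is the claim that
\[
\kappa_{W,X}\colon(TW,\mu_W^\ast)\xtimes(TX,\mu_X^\ast)\to(T(W\times X),\mu_{W\times X}^\ast)
\]
is a homomorphism, for here the lax compatibility of $\kappa$ with the monad structure must be converted into an order inequality between \emph{free} structures. My plan is to begin with condition \ref{cond:3}, namely $\mu_{W\times X}\cdot T\kappa_{W,X}\cdot\kappa_{TW,TX}\le\kappa_{W,X}\cdot(\mu_W\times\mu_X)$, precompose with $\mu_W^\ast\times\mu_X^\ast$, and cancel via $(\mu_W\times\mu_X)\cdot(\mu_W^\ast\times\mu_X^\ast)=1$ to obtain $\mu_{W\times X}\cdot T\kappa_{W,X}\cdot\kappa_{TW,TX}\cdot(\mu_W^\ast\times\mu_X^\ast)\le\kappa_{W,X}$; transposing this across $\mu_{W\times X}\dashv\mu_{W\times X}^\ast$ then yields $T\kappa_{W,X}\cdot\kappa_{TW,TX}\cdot(\mu_W^\ast\times\mu_X^\ast)\le\mu_{W\times X}^\ast\cdot\kappa_{W,X}$. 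Since $\kappa_{TW,TX}\cdot(\mu_W^\ast\times\mu_X^\ast)$ is precisely the box structure of $(TW,\mu_W^\ast)\xtimes(TX,\mu_X^\ast)$, this is exactly the homomorphism inequality sought. With all four factors established, composing them shows $g$ is a homomorphism, and transposing across $(-)\xtimes X\dashv G_X$ delivers the unique $\conv$ making the diagram commute.
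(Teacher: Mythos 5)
Your proof is correct and follows essentially the same route as the paper: reduce everything to checking that $g=q\cdot T\ev\cdot\kappa\cdot(1\times\alpha)$ is a $\mT$-monoid homomorphism out of $(T[X,V],\mu_{[X,V]}^\ast)\xtimes(X,\alpha)$, then invoke the couniversal property of the counit $\ev_{X,V}$. The only difference is organizational---you factor $\kappa_{[X,V],X}\cdot(1\times\alpha)$ into $1\xtimes\alpha$ (functoriality of $\xtimes$ plus transitivity of $\alpha$) followed by $\kappa_{[X,V],X}$ between free structures (condition \ref{cond:3} transposed), where the paper verifies the same composite in a single computation using exactly those two ingredients, and you make explicit the $T\ev$ and $q$ steps that the paper dispatches with ``consequently.''
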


\begin{proof}
The fact that the underlying set of the exponential object $G_XV$ is $[X,V]$ follows from Lemma \ref{lem:UnderlyingExponential}. The map $\kappa_{[X,V],X}\cdot(1_{T[X,V]}\times\alpha):T[X,V]\xtimes X\to T([X,V]\times X)$ is a $\mT$-monoid homomorphism:
\begin{align*}
T(\kappa_{[X,V],X}&\cdot(1_{T[X,V]}\times\alpha))\cdot\kappa_{T[X,V],X}\cdot(\mu_{[X,V]}^\ast\times\alpha)\\
&= T\kappa_{[X,V],X}\cdot\kappa_{[X,V],X}\cdot(1_{TT[X,V]}\times T\alpha)\cdot(\mu_{[X,V]}^\ast\times\alpha)\\
&\le T\kappa_{[X,V],X}\cdot\kappa_{[X,V],X}\cdot(\mu_{[X,V]}^\ast\times\mu_X^\ast)\cdot(1_{T[X,V]}\times\alpha)\\
&\le\mu_{[X,V]\times X}^\ast\cdot\kappa_{[X,V],X}\cdot(1_{T[X,V]}\times\alpha)\ ,
\end{align*}
and consequently so is the composite $g:=q \cdot T\ev_{X,V}\cdot\kappa_{[X,V],X}\cdot(1_{T[X,V]}\times\alpha):T[X,V]\xtimes X\to V$. The couniversal property of $\ev_{X,V}:[X,V]\times X\to V$ then yields a unique $\mT$-monoid homomorphism $\conv:T[X,V]\to[X,V]$ with $\ev_{X,V}\cdot(\conv\times 1_X)=g$, as claimed.
\end{proof}

\begin{thm}\label{thm:1}
Let $\mT$ be a lax monoidal powerset-enriched monad, and $(V,q)$ a $\mT$-algebra. If $(X,\alpha)$ is an exponentiable $\mT$-monoid, then $([X,V],\conv)$ is a $\mT$-algebra.
\end{thm}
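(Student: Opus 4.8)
The plan is to check directly that the map $\conv\colon T[X,V]\to[X,V]$ produced in Lemma~\ref{lem:ExOfConv} satisfies the two Eilenberg--Moore identities, namely $\conv\cdot\eta_{[X,V]}=1_{[X,V]}$ and $\conv\cdot\mu_{[X,V]}=\conv\cdot T\conv$. Write $A:=[X,V]$ and $\ev$ for $\ev_{X,V}$. The organising principle is transposition along the set-level adjunction $((-)\times X)\dashv(-)^X$: because $A\subseteq V^X$ and $\ev$ is the restriction of the universal evaluation, two maps $W\to A$ coincide as soon as the transposed maps $\ev\cdot((-)\times 1_X)\colon W\times X\to V$ coincide. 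Feeding in the defining equation $\ev\cdot(\conv\times 1_X)=q\cdot T\ev\cdot\kappa_{A,X}\cdot(1_{TA}\times\alpha)$ of $\conv$, each algebra identity is thus reduced to an equality of maps into $V$.

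For the unit law, transposing $\conv\cdot\eta_A=1_A$ leaves the identity $q\cdot T\ev\cdot\kappa_{A,X}\cdot(\eta_A\times\alpha)=\ev$. I would prove this in two moves. First, naturality of $\eta$ and $\kappa$ together with the unitality axiom~\ref{cond:1} let one discard the first coordinate: the composite $\kappa_{A,X}\cdot(\eta_A\times\alpha)$, read through $T$ of the projection onto $X$, reduces to $\alpha$, so the left-hand side becomes $q\cdot Tf\cdot\alpha$ on each $f\in A$. Second, every $f\in[X,V]$ is a homomorphism $(X,\alpha)\to(V,q^\ast)$, and since $\eta_X\le\alpha$ and $(V,q)$ is an algebra one has $q\cdot Tf\cdot\alpha=f$ (the inequality $\le$ is the homomorphism condition $Tf\cdot\alpha\le q^\ast\cdot f$, and $\ge$ follows from $\eta_X\le\alpha$, $q\cdot\eta_V=1_V$, and monotonicity of the algebra homomorphism $q\cdot Tf$). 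Combining the two gives the required equality.

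For associativity, transposition reduces the goal to $q\cdot T\ev\cdot\kappa_{A,X}\cdot(\mu_A\times\alpha)=q\cdot T\ev\cdot\kappa_{A,X}\cdot(T\conv\times\alpha)$. One inequality is clean: rewriting the right-hand transpose by naturality of $\kappa$ and $\mu$, by the algebra law $q\cdot Tq=q\cdot\mu_V$, by the lax compatibility~\ref{cond:3}, and by the transitivity identity $\mu_X\cdot T\alpha\cdot\alpha=\alpha$, one bounds it above by the left-hand transpose, i.e.\ $\conv\cdot T\conv\le\conv\cdot\mu_A$. The reverse inequality $\conv\cdot\mu_A\le\conv\cdot T\conv$ is where I expect the real difficulty: axiom~\ref{cond:3} is genuinely one-directional, and every naive rearrangement reproduces only the inequality already obtained, so some extra input is needed to close the gap.

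To supply that input I would lean on the rigidity carried by the counit. As the counit of $((-)\xtimes X)\dashv G_X$, the map $\ev$ is a $\mT$-monoid homomorphism $(A,\nbhd)\xtimes(X,\alpha)\to(V,q^\ast)$; combined with the defining equation of $\conv$ this forces $\conv\cdot\nbhd\le 1_A$, while reflexivity $\eta_A\le\nbhd$, monotonicity of $\conv$, and the unit law give $\conv\cdot\nbhd\ge\conv\cdot\eta_A=1_A$, whence $\conv\cdot\nbhd=1_A$. Moreover both $\conv\cdot\mu_A$ and $\conv\cdot T\conv$ are homomorphisms $(TTA,\mu_{TA}^\ast)\to G_XV$: for the first, $\mu_A$ is a homomorphism of free structures, and for the second, $T\conv$ is one as well, by naturality of $\mu$ and Proposition~\ref{prop:T-AlgToT-Mon}. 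Hence, by the adjunction $((-)\xtimes X)\dashv G_X$, it suffices to see that their transposes into $V$ agree, and the task becomes to show that the homomorphism constraints --- that each transpose factors through $(V,q^\ast)$ and is compatible with the splitting $\conv\cdot\nbhd=1_A$ --- force the lax estimate~\ref{cond:3} to be tight in this particular composite. Turning this expectation into an actual equality is the heart of the argument, and is the step I anticipate will require the most care.
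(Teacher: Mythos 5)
Your unit-law computation and the inequality $\conv\cdot T\conv\le\conv\cdot\mu_{[X,V]}$ (via \ref{cond:3}, the algebra law $q\cdot Tq=q\cdot\mu_V$, naturality, and transitivity of $\alpha$) are correct, and in places more direct than the paper, which obtains the unit law by sandwiching $1_{[X,V]}\le\conv\cdot\eta_{[X,V]}\le\conv\cdot\nbhd=1_{[X,V]}$. But the proposal is not a proof: the reverse inequality $\conv\cdot\mu_{[X,V]}\le\conv\cdot T\conv$, which you yourself identify as the heart of the argument, is never established. Your plan for it --- pass to transposes into $V$ and argue that the homomorphism constraints ``force the lax estimate \ref{cond:3} to be tight in this particular composite'' --- names no mechanism for upgrading a one-directional axiom to an equality, and no such upgrade is available or needed: the paper never shows \ref{cond:3} becomes tight.

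The gap is closed by an order-theoretic argument carried out on $[X,V]$ with its exponential structure $\nbhd$, using exactly the three facts you have already collected but not combined: (i) $\conv$ is a $\mT$-monoid homomorphism $(T[X,V],\mu_{[X,V]}^\ast)\to([X,V],\nbhd)$, i.e.\ $T\conv\cdot\mu_{[X,V]}^\ast\le\nbhd\cdot\conv$ (Lemma \ref{lem:ExOfConv}); (ii) the retraction $\conv\cdot\nbhd=1_{[X,V]}$; (iii) transitivity of $\nbhd$ in its equational form $\mu_{[X,V]}\cdot T\nbhd\cdot\nbhd=\nbhd$. From the unit law one gets $1_{T[X,V]}=T\conv\cdot T\eta_{[X,V]}\le T\conv\cdot\mu_{[X,V]}^\ast\le\nbhd\cdot\conv$, hence by powerset-enrichment $\mu_{[X,V]}\le\mu_{[X,V]}\cdot T\nbhd\cdot T\conv$. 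On the other hand, $1_{T[X,V]}\le\nbhd\cdot\conv$ together with (iii) and (ii) gives $\conv\cdot\mu_{[X,V]}\cdot T\nbhd\le\conv\cdot\mu_{[X,V]}\cdot T\nbhd\cdot\nbhd\cdot\conv=\conv\cdot\nbhd\cdot\conv=\conv$, and combining the two chains yields $\conv\cdot\mu_{[X,V]}\le\conv\cdot\mu_{[X,V]}\cdot T\nbhd\cdot T\conv\le\conv\cdot T\conv$. The slack in \ref{cond:3} is absorbed by the idempotency of $\nbhd$ and the splitting $\conv\cdot\nbhd=1_{[X,V]}$, not eliminated. Without an argument of this kind your proposal establishes only one of the two inequalities required for $\conv\cdot\mu_{[X,V]}=\conv\cdot T\conv$.
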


\begin{proof}
The exponentiable $\mT$-monoid $(X,\alpha)$ yields a $\mT$-monoid structure $\nbhd:[X,V]\to T[X,V]$ that makes $\ev_{X,V}:[X,V]\times X\to V$ a $\mT$-monoid homomorphism:
\[
T\ev_{X,V}\cdot\kappa_{[X,V],X}\cdot(\nbhd\times\alpha)\le q^\ast\cdot\ev_{X,V}\ .
\]
All the maps composing $g:=q \cdot T\ev_{X,V}\cdot\kappa_{[X,V],X}\cdot(1_{T[X,V]}\times\alpha)$ are monotone and $[X,V]$ is ordered pointwise, so the defining equality  $\ev_{X,V}\cdot(\conv\times 1_X)=g$ (Lemma \ref{lem:ExOfConv}) implies that $\conv$ is monotone. Moreover,
\begin{align*}
\ev_{X,V}&=q \cdot\eta_V\cdot\ev_{X,V}=q \cdot T\ev_{X,V}\cdot\eta_{[X,V]\times X}\\
&\le q \cdot T\ev_{X,V}\cdot\kappa_{[X,V],X}\cdot(\eta_{[X,V]}\times\eta_X)\\
&\le q \cdot T\ev_{X,V}\cdot\kappa_{[X,V],X}\cdot(\eta_{[X,V]}\times\alpha)=\ev_{X,V}\cdot(\conv\times 1_X)\cdot(\eta_{[X,V]}\times 1_X)\ ,
\end{align*}
that is, $1_{[X,V]}\le\conv\cdot\eta_{[X,V]}$. The unicity of $\mT$-monoid homomorphisms $[X,V]\to[X,V]$ in the couniversal property of $\ev_{X,V}:[X,V]\times X\to V$ also forces $\conv\cdot\nbhd=1_{[X,V]}$ because $g\cdot(\nbhd\times 1_X)=\ev_{X,V}$:
\[
\ev_{X,V}\le q\cdot T\ev_{X,V}\cdot\kappa_{[X,V],X}\cdot(\eta_{[X,V]}\times\eta_X)\le g\cdot(\nbhd\times 1_X)\le q\cdot q^\ast\cdot\ev_{X,V}=\ev_{X,V}\ .
\]
Hence, monotonicity of $\conv$ and $\eta_{[X,V]}\le\nbhd$ yield
\[
\conv\cdot\eta_{[X,V]}=\conv\cdot\nbhd=1_{[X,V]}\ .
\]
Since $\conv$ is a $\mT$-monoid homomorphism and $\mu_{[X,V]}\cdot T\eta_{[X,V]}=1_{T[X,V]}$, one has $1_{T[X,V]}=T\conv\cdot T\eta_{[X,V]}\le T\conv\cdot\mu_{[X,V]}^\ast\le\nbhd\cdot\conv$, so
\[
1_{T[X,V]}\le\nbhd\cdot\conv\dand \mu_{[X,V]}\le\mu_{[X,V]}\cdot T\nbhd\cdot T\conv\ .
\]
By transitivity of $\nbhd$, one obtains  $\conv\cdot\mu_{[X,V]}\cdot T\nbhd\le\conv\cdot\mu_{[X,V]}\cdot T\nbhd\cdot\nbhd\cdot\conv=\conv\cdot\nbhd\cdot\conv=\conv$, so
\[
\conv\cdot\mu_{[X,V]}\le\conv\cdot\mu_{[X,V]}\cdot T\nbhd\cdot T\conv\le\conv\cdot T\conv\ .
\]
With
\[
\conv\cdot T\conv\le\conv\cdot T\conv\cdot\mu_{[X,V]}^\ast\cdot\mu_{[X,V]}\le\conv\cdot\nbhd\cdot\conv\cdot\mu_{[X,V]}\ ,
\]
one concludes that
\[
\conv\cdot\mu_{[X,V]}=\conv\cdot T\conv
\]
and $([X,V],\conv)$ is a $\mT$-algebra.
\end{proof}

\subsection{$\mT$-algebraic hom-sets are exponential}%%%%%%%%%%%%%%%%%%%%%%%%%%
Let $\mT$ be a lax monoidal powerset-enriched monad, and $\sV$ a family of $\mT$-monoids. We say that a $\mT$-monoid $(X,\alpha)$ is \df{$\sV$-exponentiable} if for all $(V,q)\in\sV$, there is a $\mT$-monoid structure $\nbhd_{[X,V]}:[X,V]\to T[X,V]$ that makes $\ev_{X,V}:[X,V]\times X\to V$ into a $((-)\xtimes X)$-couniversal arrow for $V$: for all $g:Y\xtimes X\to V$ in $\Mons{\mT}$, there is exactly one $\mT$-monoid homomorphism $f:Y\to[X,V]$ with $\ev_{X,V}\cdot(f\times 1_X)=g$
\[
\xymatrix{[X,V]\\Y\ar@{.>}[u]^f}\qquad
\xymatrix@C=30pt{[X,V]\xtimes X\ar[r]^-{\ev}&V\\
Y\xtimes X\ar[u]^{f\xtimes 1_X}\ar[ur]<-1pt>_g&}
\]
Given a $\sV$-exponentiable $\mT$-monoid $(X,\alpha)$, we define for any $\mT$-monoid $(Y,\beta)$ a $\mT$-monoid structure $\omega:[X,Y]\to T[X,Y]$ via the initial lift of the family $([X,f]:[X,Y]\to[X,V])_{V\in\sV,f\in[Y,V]}$:
 \[
\omega:=\bigwedge\nolimits_{V\in\sV,f\in[Y,V]} (T[X,f])^\ast\cdot\nbhd_{[X,V]}\cdot[X,f]
\]
(see Proposition \ref{prop:T-MonTopological}).

\begin{lem}\label{lem:RestrictionToV}
Let $\mT$ be a lax monoidal powerset-enriched monad, $\sV$ a family of $\mT$-monoids, $(X,\alpha)$ a $\sV$-exponentiable $\mT$-monoid, and $(Y,\beta)$ a $\mT$-monoid. If $\sV$ is initially dense in $\Mons{\mT}$, then $\ev_{X,Y}:[X,Y]\xtimes X\to Y$ is a $\mT$-monoid homomorphism.
\end{lem}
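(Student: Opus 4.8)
The plan is to exploit the initial density of $\sV$ to reduce the homomorphism inequality for $\ev_{X,Y}$ to one inequality per test map into $\sV$, and then to recognise each of these as a consequence of the fact that the individual evaluations $\ev_{X,V}$ are already homomorphisms. Writing $\omega$ for the initial $\mT$-monoid structure on $[X,Y]$ defined just above, the box product carries $\omega\xtimes\alpha=\kappa_{[X,Y],X}\cdot(\omega\times\alpha)$, and the goal is $T\ev_{X,Y}\cdot(\omega\xtimes\alpha)\le\beta\cdot\ev_{X,Y}$. Since $\sV$ is initially dense, Proposition \ref{prop:T-MonTopological} gives $\beta=\bigwedge_{V\in\sV,\,f\in[Y,V]}(Tf)^\ast\cdot q^\ast\cdot f$, and because precomposition with the fixed map $\ev_{X,Y}$ preserves pointwise infima, it suffices to prove, for each $V$ and each $f\in[Y,V]$,
\[
T\ev_{X,Y}\cdot(\omega\xtimes\alpha)\le(Tf)^\ast\cdot q^\ast\cdot f\cdot\ev_{X,Y}\ .
\]

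First I would pass to the adjoint form. As $(TY,\mu_Y)$ is a free $\mT$-algebra, $Tf:(TY,\mu_Y)\to(TV,\mu_V)$ is a homomorphism of free algebras, hence sup-preserving, so $(Tf)^\ast$ exists with $Tf\dashv(Tf)^\ast$; the displayed inequality is thus equivalent to $Tf\cdot T\ev_{X,Y}\cdot(\omega\xtimes\alpha)\le q^\ast\cdot f\cdot\ev_{X,Y}$. The key elementary identity is $f\cdot\ev_{X,Y}=\ev_{X,V}\cdot([X,f]\times 1_X)$, valid because $[X,f]$ is post-composition with $f$. Applying $T$ to it and using naturality of $\kappa$ in its first variable to slide $T[X,f]$ through $\kappa_{[X,Y],X}$, the left-hand side rewrites as
\[
T\ev_{X,V}\cdot\kappa_{[X,V],X}\cdot\bigl((T[X,f]\cdot\omega)\times\alpha\bigr)\ .
\]

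Next I would invoke the definition of $\omega$ as an infimum: from $\omega\le(T[X,f])^\ast\cdot\nbhd_{[X,V]}\cdot[X,f]$ and $T[X,f]\dashv(T[X,f])^\ast$, one obtains $T[X,f]\cdot\omega\le\nbhd_{[X,V]}\cdot[X,f]$. Feeding this in and using, in turn, monotonicity of products, monotonicity of $\kappa$ (established after condition \ref{cond:enr}), and monotonicity of $T$ on arrows (each $Tw$ being a free-algebra homomorphism, hence order-preserving), bounds the expression above by $T\ev_{X,V}\cdot\kappa_{[X,V],X}\cdot(\nbhd_{[X,V]}\times\alpha)\cdot([X,f]\times 1_X)$. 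Finally, $\sV$-exponentiability of $(X,\alpha)$ makes $\ev_{X,V}:[X,V]\xtimes X\to V$ a $\mT$-monoid homomorphism, that is,
\[
T\ev_{X,V}\cdot\kappa_{[X,V],X}\cdot(\nbhd_{[X,V]}\times\alpha)\le q^\ast\cdot\ev_{X,V}\ ;
\]
precomposing with $[X,f]\times 1_X$ and reusing the identity $\ev_{X,V}\cdot([X,f]\times 1_X)=f\cdot\ev_{X,Y}$ produces exactly $q^\ast\cdot f\cdot\ev_{X,Y}$, closing the argument.

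I expect the main difficulty to be bookkeeping rather than conceptual: turning the homomorphism condition into an infimum over the initially dense family, and keeping the two right adjoints $(Tf)^\ast$ and $(T[X,f])^\ast$ pointed in the correct direction. The one genuinely essential ingredient is that each $\ev_{X,V}$ is already a homomorphism, supplied by $\sV$-exponentiability; initial density is precisely the hypothesis that upgrades this family of facts indexed by $\sV$ into a single statement about $\ev_{X,Y}$.
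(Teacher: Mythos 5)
Your proposal is correct and follows essentially the same route as the paper: both reduce the claim, via initial density, to showing that each composite $f\cdot\ev_{X,Y}$ with $f\in[Y,V]$ is a homomorphism, and both establish this by combining the identity $f\cdot\ev_{X,Y}=\ev_{X,V}\cdot([X,f]\times 1_X)$, the defining inequality $\omega\le(T[X,f])^\ast\cdot\nbhd_{[X,V]}\cdot[X,f]$, naturality and monotonicity of $\kappa$, and the homomorphism property of $\ev_{X,V}$ supplied by $\sV$-exponentiability. The only difference is presentational: you spell out the initial-density reduction as an explicit infimum formula for $\beta$ and an adjunction transposition, where the paper invokes it as the standard characterization of initial structures.
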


\begin{proof}
By applying $T$ to the equality $f\cdot\ev_{X,Y}=\ev_{X,V}\cdot([X,f]\times 1_X)$ for all $f\in[Y,V]$, we obtain that the maps $f\cdot\ev_{X,Y}:[X,Y]\xtimes X\to V$ are $\mT$-monoid homorphisms:
\begin{align*}
T(f\cdot\ev_{X,Y})&\cdot\kappa_{X,Y}\cdot(\omega\times\alpha)\\
&\le T(f\cdot\ev_{X,Y})\cdot\kappa_{X,Y}\cdot(((T[X,f])^\ast\cdot\nbhd_{[X,V]}\cdot[X,f])\times\alpha)\\
&\le Tf\cdot T\ev_{X,Y}\cdot(T([X,f]\times 1_{X}))^\ast\cdot\kappa_{X,V}\cdot((\nbhd_{[X,V]}\cdot[X,f])\times\alpha)\\
&\le T\ev_{X,V}\cdot\kappa_{X,V}\cdot(\nbhd_{[X,V]}\times\alpha)\cdot([X,f]\times 1_X)\\
&\le q^\ast\cdot\ev_{X,V}\cdot([X,f]\times 1_X)\\
&= q^\ast\cdot f\cdot\ev_{X,Y}\ .
\end{align*}
As the family $\sV$ is initially dense, $\ev_{X,Y}$ is a $\mT$-monoid homomorphism.
\end{proof}

\begin{lem}\label{lem:ProductToOneVariable}
Let $\mT$ be a lax monoidal powerset-enriched monad. Given $\mT$-monoids $(X,\alpha)$, $(Y,\beta)$, $(Z,\gamma)$ and a homomorphism $g:Z\xtimes X\to Y$, the map $g_z:X\to Y$ is a $\mT$-monoid homomorphism for all $z\in Z$ (where $g_z(x):=g(z,x)$ for all $x\in X$).
\end{lem}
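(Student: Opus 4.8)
The plan is to exhibit $g_z$ as a composite of $\mT$-monoid homomorphisms and then invoke closure of $\Mons{\mT}$ under composition. Writing $z:1\to Z$ also for the map picking out the element $z$ (so $z(\star)=z$), one checks on underlying sets that
\[
g_z=g\cdot(z\xtimes 1_X)\cdot\rho_X\ ,
\]
since $\rho_X(x)=(\star,x)$, $(z\times 1_X)(\star,x)=(z,x)$, and $g(z,x)=g_z(x)$. It therefore suffices to verify that each of the three factors is a $\mT$-monoid homomorphism.

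First I would recall, exactly as in the proof of Lemma \ref{lem:UnderlyingExponential}, that the map $z:E\to(Z,\gamma)$ is a homomorphism, where $E=(1,\eta_1)$: indeed $Tz\cdot\eta_1=\eta_Z\cdot z\le\gamma\cdot z$. Since $1_X:(X,\alpha)\to(X,\alpha)$ is trivially a homomorphism and $(-)\xtimes(-)$ is a functor (Section \ref{ssec:BoxProd}), it follows that $z\xtimes 1_X:E\xtimes(X,\alpha)\to(Z,\gamma)\xtimes(X,\alpha)$ is a homomorphism whose underlying map is $z\times 1_X:1\times X\to Z\times X$.

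Next, by the proposition of Section \ref{ssec:BoxProd} asserting that $(\Mons{\mT},\xtimes,E)$ is a monoidal category whose structure maps are those of the cartesian structure of $\Set$, the unitor $\rho_X:(X,\alpha)\to E\xtimes(X,\alpha)$ is an isomorphism, hence in particular a homomorphism. Composing with the hypothesis that $g:(Z,\gamma)\xtimes(X,\alpha)\to(Y,\beta)$ is a homomorphism, the displayed factorization shows that $g_z=g\cdot(z\xtimes 1_X)\cdot\rho_X$ is a homomorphism $(X,\alpha)\to(Y,\beta)$, as required.

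I do not expect a serious obstacle here: the content lies entirely in choosing the right factorization. The only points needing care are the bookkeeping that identifies $g_z$ with the stated composite on underlying sets, and the verification that the \emph{constant} $z:1\to Z$ is a homomorphism out of the tensor unit $E$ (which simply reuses the reflexivity estimate $\eta_Z\cdot z\le\gamma\cdot z$ from Lemma \ref{lem:UnderlyingExponential}); one must also use $\rho_X:X\to 1\times X$ rather than $\lambda_X$, so that $z$ is inserted into the $Z$-coordinate of $Z\times X$.
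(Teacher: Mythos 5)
Your proof is correct and follows essentially the same route as the paper's: both rest on the factorization $g_z=g\cdot(z\times 1_X)\cdot\rho_X$, the reflexivity estimate $\eta_Z\cdot z\le\gamma\cdot z$, and the monoidal unit identity $T\rho_X=\kappa_{1,X}\cdot(\eta_1\times 1_{TX})\cdot\rho_{TX}$. The only difference is presentational: the paper unrolls the resulting inequality chain by hand (using that identity together with naturality and monotonicity of $\kappa$), whereas you delegate exactly those steps to the functoriality of $(-)\xtimes(-)$ and to the proposition that the unitors are isomorphisms in $\Mons{\mT}$, which is a perfectly sound modularization.
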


\begin{proof}
If $g:Z\xtimes X\to Y$ is a $\mT$-monoid homomorphism, then, denoting by $z:1\to Z$ the constant map onto $z\in Z$, one has
\begin{align*}
Tg_z\cdot\alpha&=Tg\cdot T(z\times 1_X)\cdot T\rho_X\cdot\alpha\\
&=Tg\cdot T(z\times 1_X)\cdot\kappa_{1,X}\cdot(\eta_1\times 1_{TX})\cdot\rho_{TX}\cdot\alpha\\
&= Tg\cdot\kappa_{Z,X}\cdot(\eta_Z\times1_{TX})\cdot(z\times 1_{TX})\cdot\rho_{TX}\cdot\alpha\\
&\le Tg\cdot\kappa_{Z,X}\cdot(\gamma\times1_{TX})\cdot(1_{Z}\times\alpha)\cdot(z\times 1_X)\cdot\rho_{X}\\
&\le\beta\cdot g\cdot(z\times 1_X)\cdot\rho_{X}=\beta\cdot g_z\ ,
\end{align*}
so that $g_z$ is a $\mT$-monoid homomorphism.
\end{proof}

\begin{prop}\label{prop:RestrictionToV}
Let $\mT$ be a lax monoidal powerset-enriched monad and $\sV$ an initially dense family of $\mT$-monoids. A $\mT$-monoid $(X,\alpha)$ is $\sV$-exponentiable if and only if it is exponentiable.
\end{prop}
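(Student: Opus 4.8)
The plan is to prove the two implications separately; the forward implication is immediate, while the content lies in showing that $\sV$-exponentiability, combined with initial density of $\sV$, forces full exponentiability. For the forward direction, suppose $(X,\alpha)$ is exponentiable, with $G_X$ right adjoint to $(-)\xtimes X$. By Lemma \ref{lem:UnderlyingExponential} the underlying set of $G_X V$ is $[X,V]$ for every $\mT$-monoid $V$, and the counit component $\ev_{X,V}:[X,V]\xtimes X\to V$ is a couniversal arrow for $V$ by the definition of a right adjoint. Taking $\nbhd_{[X,V]}$ to be the structure carried by $G_X V$ then witnesses $\sV$-exponentiability, upon restricting attention to $V\in\sV$.

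For the converse, assume $(X,\alpha)$ is $\sV$-exponentiable; I will build a right adjoint to $(-)\xtimes X$ by exhibiting a couniversal arrow for each $\mT$-monoid $(Y,\beta)$. Equip the set $[X,Y]$ with the structure $\omega$ introduced above Lemma \ref{lem:RestrictionToV}, namely the initial lift of the sink $([X,f]:[X,Y]\to[X,V])_{V\in\sV,\,f\in[Y,V]}$ (which exists by Proposition \ref{prop:T-MonTopological}). By Lemma \ref{lem:RestrictionToV} the map $\ev_{X,Y}:([X,Y],\omega)\xtimes X\to Y$ is a $\mT$-monoid homomorphism; I claim it is couniversal, so that $G_XY:=([X,Y],\omega)$ furnishes the desired right adjoint.

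To verify couniversality, let $(Z,\gamma)$ be a $\mT$-monoid and $g:Z\xtimes X\to Y$ a homomorphism. Take $\bar{g}:Z\to[X,Y]$ to be the $\Set$-transpose $z\mapsto g_z$. By Lemma \ref{lem:ProductToOneVariable} each $g_z$ is a homomorphism, so $\bar{g}$ is a well-defined map with values in $[X,Y]$, and the $\Set$-adjunction $((-)\times X)\dashv(-)^X$ makes $\bar{g}$ the unique map satisfying $\ev_{X,Y}\cdot(\bar{g}\times 1_X)=g$. It remains to see that $\bar{g}$ is a homomorphism. Since $\omega$ is an initial structure, $\bar{g}:(Z,\gamma)\to([X,Y],\omega)$ is a homomorphism if and only if each composite $[X,f]\cdot\bar{g}:(Z,\gamma)\to([X,V],\nbhd_{[X,V]})$ is one. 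But $[X,f]\cdot\bar{g}$ sends $z\mapsto f\cdot g_z=(f\cdot g)_z$, so it is precisely the $\Set$-transpose of the homomorphism $f\cdot g:Z\xtimes X\to V$; as $V\in\sV$ and $\ev_{X,V}$ is couniversal for $V$, this transpose is a $\mT$-monoid homomorphism (uniqueness of transposes at the $\Set$-level, together with faithfulness of the forgetful functor, identifies it with the homomorphism produced by the couniversal property). Hence $\bar{g}$ is a homomorphism, and it is the unique homomorphism lying over $g$, which establishes couniversality of $\ev_{X,Y}$ and thus exponentiability of $(X,\alpha)$.

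The crux of the argument, and the step I expect to be the most delicate, is the identification $[X,f]\cdot\bar{g}=(f\cdot g)_{(-)}$, the $\Set$-transpose of $f\cdot g$, which is what lets the couniversal property assumed only at objects $V\in\sV$ propagate, through the initial structure $\omega$, into the homomorphism property of $\bar{g}$ at an arbitrary $(Y,\beta)$. Initial density of $\sV$ is needed precisely to guarantee (via Lemma \ref{lem:RestrictionToV}) that the candidate couniversal arrow $\ev_{X,Y}$ is itself a homomorphism; the reduction of homomorphism-testing for $\bar{g}$ to the composites $[X,f]\cdot\bar{g}$ is the general initiality criterion for the topological functor of Proposition \ref{prop:T-MonTopological}, and the remaining verifications are routine transfers between the $\Set$-level adjunction and the $\mT$-monoid level, using faithfulness of the forgetful functor.
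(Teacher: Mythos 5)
Your proof is correct and follows essentially the same route as the paper's: the forward direction via the counit of the adjunction, and the converse by equipping $[X,Y]$ with the initial structure $\omega$, invoking Lemma \ref{lem:RestrictionToV} for the homomorphism property of $\ev_{X,Y}$, and reducing the homomorphism property of the transpose $\bar{g}$ to the composites $[X,f]\cdot\bar{g}$, which are identified with the homomorphisms supplied by the couniversal property at each $V\in\sV$. The key identification $[X,f]\cdot\bar{g}=(f\cdot g)_{(-)}$ is exactly the step the paper uses (there written as $[X,h]\cdot f=k$), so no further comment is needed.
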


\begin{proof}
Obviously, an exponentiable $\mT$-monoid is also $\sV$-exponentiable. To prove the other implication, we need to show that $\ev_{X,Y}:([X,Y]\times X,\kappa_{X,Y}\cdot(\omega\times\alpha))\to(Y,\beta)$ is a $((-)\xtimes X)$-couniversal arrow for any $\mT$-monoid $(Y,\beta)$. Lemma \ref{lem:RestrictionToV} already asserts that $\ev_{X,Y}$ is a homomorphism.

Consider a homomorphism $g:Z\xtimes X\to Y$ and $(V,q)\in\sV$. By Lemma \ref{lem:ProductToOneVariable}, the unique map $f:Z\to Y^X$ with $\ev_{X,Y}\cdot(f\times 1_X)=g$ corestricts to $f:Z\to[X,Y]$. The couniversal property of $\ev_{X,V}$, then produces for any $h\in[Y,V]$ a unique homomorphism $k:Z\to[X,V]$ with $\ev_{X,V}\cdot(k\times 1_X)=h\cdot g$; since
\[
h\cdot g=h\cdot\ev_{X,Y}\cdot(f\times 1_X)=\ev_{X,V}\cdot(([X,h]\cdot f)\times 1_X)\ ,
\]
one necessarily has $[X,h]\cdot f=k:Z\to[X,V]$, that is, $[X,h]\cdot f$ is a homomorphism for all $h\in[Y,V]$. As $\omega$ is the initial $\mT$-monoid structure on $[X,Y]$ with respect to all $h\in[Y,V]$ with $V\in\sV$, one concludes that the unique map $f:Z\to[X,Y]$ is in fact a homomorphism.
\end{proof}

\subsubsection{The convergence map}\label{ssec:conv}%%%%%%%%%%%%%%%%%%%%%%%%%
Let $\mT$ be a lax monoidal powerset-enriched monad and $(V,q)$ a $\mT$-algebra. For a $\mT$-monoid $(X,\alpha)$, we equip the set $[X,V]$ of all homomorphisms $f:(X,\alpha)\to(V,q^\ast)$ with its pointwise order. There is then a map $\conv_{[X,V]}=\conv:T[X,V]\to[X,V]$ defined as follows: for $\ff\in T[X,Y]$,
\begin{align*}
\conv(\,\ff\:\!)&:=\bigwedge\{g\in[X,V]\mid\forall x\in X\ (T\ev_{X,V}\cdot\kappa_{[X,V],X}(\,\ff,\alpha(x))\le q^\ast\cdot\ev_{X,V}(g,x))\}\\
&=\bigwedge\{g\in[X,V]\mid\forall x\in X\ (q\cdot T\ev_{X,V}\cdot\kappa_{[X,V],X}\cdot(1_{[X,V]}\times\alpha)(\,\ff,x)\le\ev_{X,V}(g,x))\}\ ,
\end{align*}
with infimum taken in $V^X$ (Lemma \ref{lem:ConvSmallest} below shows that the codomain of $\conv$ is indeed $[X,V]$). With the pointwise order on $[X,V]^{T[X,V]}$, $\conv$ is then the smallest map $c:T[X,V]\to[X,V]$ that makes the following diagram commute laxly:
\[
\xymatrix@C=3em{\rule{1.5ex}{0ex}[X,V]\times X\ar[r]^-{\ev}_(0.3){}="u"&V\ar@{}[l]^(0.05){\rule{0ex}{0.5ex}}="ur"\\
T[X,V]\times X\ar[u]<-0.5ex>^{c\times 1_X}\ar@{}[r]^(0.25){\rule{0ex}{1ex}}="d"&\ar@{}"d";"u"|(0.5){\ge}\ar"d";"ur"_(0.6){\quad q\cdot T\ev\cdot\kappa\cdot(1\times\alpha)}}
\]

\begin{lem}\label{lem:ConvSmallest}
Let $\mT$ be a powerset-enriched monad, $(X,\alpha)$ a $\mT$-monoid, and $(V,q)$ a $\mT$-algebra. With $[X,V]$ ordered pointwise, one has ${\bigwedge S}\in [X,V]$ for all $S\subseteq [X,V]$, that is, the order-embedding $[X,V]\into V^X$ creates infima.
\end{lem}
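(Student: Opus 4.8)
The plan is to use the adjunction $q\dashv q^\ast$ to transpose the homomorphism condition into the sup-semilattice structure of $V$, and then to observe that the transposed quantity attached to a pointwise infimum of homomorphisms is itself a lower bound of the family. First I would record that $V$ carries a complete lattice structure: as a $\mT$-algebra, $(V,q)$ is a $\mP$-algebra via $\tau$, hence a complete sup-semilattice (Section \ref{EMAlgsAndSup}), and a complete sup-semilattice has all infima. Consequently infima exist in $V^X$ and are computed pointwise, so for $S\subseteq[X,V]$ the map $g:=\bigwedge S$ is well defined in $V^X$; the content of the lemma is then that $g$ lies in $[X,V]$, i.e. that $Tg\cdot\alpha\le q^\ast\cdot g$.

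Since $q\cdot q^\ast=1_V$ and $1_{TV}\le q^\ast\cdot q$ (Section \ref{EMAlgsAndSup}), the maps $q$ and $q^\ast$ form an adjunction $q\dashv q^\ast$, so the required inequality $Tg\cdot\alpha\le q^\ast\cdot g$ is equivalent, by transposition, to $q\cdot Tg\cdot\alpha\le g$. Thus it suffices to establish this last inequality. For each $f\in S$ one has $g\le f$ pointwise in $V$, whence $q\cdot Tg\le q\cdot Tf$ by the monotonicity of $q\cdot T(-)$ established in Proposition \ref{prop:T-AlgStructActAsMultiplication}; precomposing with $\alpha$ and using that $f$ is a homomorphism gives
\[
q\cdot Tg\cdot\alpha\le q\cdot Tf\cdot\alpha\le q\cdot q^\ast\cdot f=f\ .
\]
Hence $q\cdot Tg\cdot\alpha$ is a lower bound of $S$ in $V^X$, and since $g=\bigwedge S$ is the greatest lower bound we conclude $q\cdot Tg\cdot\alpha\le g$, as required; transposing back, $g$ is a homomorphism.

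The step that needs care — and the only place the argument could go astray — is the passage from $g\le f$ to $q\cdot Tg\le q\cdot Tf$: the functor $T$ is not assumed monotone on hom-sets, so one cannot infer $Tg\le Tf$ directly. This inequality must instead be routed through Proposition \ref{prop:T-AlgStructActAsMultiplication}, which supplies precisely the monotonicity of $q\cdot T(-)$ coming from the powerset-enrichment. The key observation, once this monotonicity is available, is that the transposed expression $q\cdot Tg\cdot\alpha$ is \emph{itself} a lower bound of $S$, which is exactly what lets the defining property of the infimum close the argument.
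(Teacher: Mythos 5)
Your proof is correct and follows essentially the same route as the paper's: both transpose the homomorphism condition to $q\cdot T(\bigwedge S)\cdot\alpha\le\bigwedge S$ and obtain it from the monotonicity of $q\cdot T(-)$ supplied by Proposition \ref{prop:T-AlgStructActAsMultiplication}, together with the defining property of the infimum. Your explicit remark that $T$ alone need not be monotone, so the inequality must pass through that proposition, is exactly the point the paper's condensed argument relies on.
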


\begin{proof}
Let $S\subseteq[X,V]$. For any $g\in S$, we have $Tg\cdot\alpha\le q^\ast\cdot g$, or equivalently $q\cdot Tg\cdot\alpha\le g$. By Proposition \ref{prop:T-AlgStructActAsMultiplication},
\[
q\cdot T(\textstyle{\bigwedge S})\cdot\alpha\le\bigwedge_{g\in S}q\cdot Tg\cdot\alpha\le\bigwedge S\ ,
\]
so $\bigwedge S$ is a $\mT$-monoid homomorphism: $T(\bigwedge S)\cdot\alpha\le q^\ast\cdot\bigwedge S$.
\end{proof}

\begin{prop}\label{prop:thm:2}
Let $\mT$ be a lax monoidal powerset-enriched monad, $(V,q)$ a $\mT$-algebra and $(X,\alpha)$ a $\mT$-monoid such that $([X,V],\conv)$ is a $\mT$-algebra. Then $([X,V],\conv^\ast)$ with $\ev_{X,V}:[X,V]\times X\to V$ form a $((-)\xtimes X)$-couniversal arrow for $V$.
\end{prop}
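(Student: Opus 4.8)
The plan is to check separately the two ingredients of a couniversal arrow: first that $\ev_{X,V}$ is a morphism of $\Mons{\mT}$ when $[X,V]$ is given the structure $\conv^\ast$, and then that every homomorphism $g\colon Z\xtimes X\to V$ factors uniquely through it. The two main tools will be the adjunctions $q\dashv q^\ast$ and $\conv\dashv\conv^\ast$ arising from the $\mT$-algebra structures $q$ and $\conv$ (as in \ref{EMAlgsAndSup}); each lets me trade an inequality involving $q^\ast$ (resp. $\conv^\ast$) for one involving $q$ (resp. $\conv$), via $q\cdot h\le k\iff h\le q^\ast\cdot k$ and $\conv\cdot h\le k\iff h\le\conv^\ast\cdot k$. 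Note first that since $([X,V],\conv)$ is a $\mT$-algebra, Proposition \ref{prop:T-AlgToT-Mon} makes $([X,V],\conv^\ast)$ a $\mT$-monoid, whose box product with $(X,\alpha)$ carries the structure $\kappa_{[X,V],X}\cdot(\conv^\ast\times\alpha)$.

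For the first ingredient I would start from the defining lax inequality of $\conv$ from \ref{ssec:conv}, namely $q\cdot T\ev_{X,V}\cdot\kappa_{[X,V],X}\cdot(1_{[X,V]}\times\alpha)\le\ev_{X,V}\cdot(\conv\times 1_X)$, and precompose it with $\conv^\ast\times 1_X$. Since $\conv\cdot\conv^\ast=1_{[X,V]}$, the right-hand side collapses to $\ev_{X,V}$, giving $q\cdot T\ev_{X,V}\cdot\kappa_{[X,V],X}\cdot(\conv^\ast\times\alpha)\le\ev_{X,V}$. Transposing across $q\dashv q^\ast$ then yields precisely the homomorphism condition $T\ev_{X,V}\cdot\kappa_{[X,V],X}\cdot(\conv^\ast\times\alpha)\le q^\ast\cdot\ev_{X,V}$, so $\ev_{X,V}\colon([X,V],\conv^\ast)\xtimes(X,\alpha)\to(V,q^\ast)$ is a $\mT$-monoid homomorphism.

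For the universal property, given a homomorphism $g\colon(Z,\gamma)\xtimes(X,\alpha)\to(V,q^\ast)$, I would take its set-level exponential transpose $f\colon Z\to V^X$; by Lemma \ref{lem:ProductToOneVariable} each $g(z,-)$ lies in $[X,V]$, so $f$ corestricts to a map $Z\to[X,V]$, and it is the unique set map with $\ev_{X,V}\cdot(f\times 1_X)=g$. The content is to show $f$ is a homomorphism, i.e.\ $Tf\cdot\gamma\le\conv^\ast\cdot f$, equivalently $\conv\cdot Tf\cdot\gamma\le f$ by $\conv\dashv\conv^\ast$. Using naturality of $\kappa$ in the form $\kappa_{[X,V],X}\cdot(Tf\times 1_{TX})=T(f\times 1_X)\cdot\kappa_{Z,X}$, the composite $q\cdot T\ev_{X,V}\cdot\kappa_{[X,V],X}\cdot(1\times\alpha)\cdot((Tf\cdot\gamma)\times 1_X)$ rewrites as $q\cdot Tg\cdot\kappa_{Z,X}\cdot(\gamma\times\alpha)$, which is $\le g$ since $g$ is a homomorphism and $q\cdot q^\ast=1_V$. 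Read pointwise at each $(z,x)$, this says exactly that $f(z)$ satisfies the inequality defining membership in the set whose infimum is $\conv\big(Tf(\gamma(z))\big)$; as that infimum is a lower bound for its set, $\conv\big(Tf(\gamma(z))\big)\le f(z)$, i.e.\ $\conv\cdot Tf\cdot\gamma\le f$.

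Uniqueness is inherited from $\Set$: any homomorphism $Z\to[X,V]$ making the triangle commute must equal the unique set-theoretic transpose $f$. Hence $([X,V],\conv^\ast)$ together with $\ev_{X,V}$ is $((-)\xtimes X)$-couniversal for $V$. I expect the third step to be the crux: the real care is in performing the $\kappa$-naturality reduction correctly and then matching the resulting composite against the \emph{pointwise infimum} definition of $\conv$, making sure the orientation is right so that $f(z)$ lying in the defining set forces $\conv(\dots)\le f(z)$ rather than the reverse inequality.
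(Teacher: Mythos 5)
Your proposal is correct and follows essentially the same route as the paper: Proposition \ref{prop:T-AlgToT-Mon} plus $\conv\cdot\conv^\ast=1_{[X,V]}$ to show $\ev_{X,V}$ is a homomorphism (the paper phrases this pointwise via the infimum of the defining set being $\conv\cdot\conv^\ast(f)=f$, which is the same computation as your precomposition with $\conv^\ast\times 1_X$), then Lemma \ref{lem:ProductToOneVariable}, naturality of $\kappa$, and the infimum definition of $\conv$ to get $\conv\cdot Tf\cdot\gamma\le f$, i.e.\ $Tf\cdot\gamma\le\conv^\ast\cdot f$. Your identification of the orientation issue in the last step as the crux matches where the paper's argument does its real work.
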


\begin{proof}
First note that $([X,V],\conv^\ast)$ is indeed a $\mT$-monoid by Proposition \ref{prop:T-AlgToT-Mon}, and that $\conv\cdot\conv^\ast=1_{[X,V]}$ (since $1_{[X,V]}=\conv\cdot\eta_{[X,V]}\le\conv\cdot\conv^\ast\le1_{[X,V]}$, see \cite{Sea:10}). Hence, for a given $f\in[X,V]$ the infimum of those $g\in[X,V]$ such that
\[
q\cdot T\ev_{X,V}\cdot\kappa_{[X,V],X}(\conv^\ast(f),\alpha(x))\le\ev_{X,V}(g,x)
\]
(for all $x\in X$) is $\conv\cdot\conv^\ast(f)=f$. As the order in $[X,V]$ is pointwise, one has
\[
q\cdot T\ev_{X,V}\cdot\kappa_{[X,V],X}(\conv^\ast(f),\alpha(x))\le\ev_{X,V}(f,x)\ ,
\] 
so that $\ev_{X,V}:[X,V]\xtimes X\to V$ is a $\mT$-monoid homomorphism. 

By Lemma~\ref{lem:ProductToOneVariable}, for a $\mT$-monoid $(Z,\gamma)$ and any homomorphism $g:Z\xtimes X\to V$, there is a unique map $f:Z\to[X,V]$ such that $\ev_{X,V}\cdot(f\times 1_X)=g$. We are therefore left to show that $f$ is a homomorphism. For this, note that
\begin{align*}
T\ev_{X,V}\cdot\kappa_{[X,V],X}\cdot(Tf\cdot\gamma\times\alpha)&=T\ev_{X,V}\cdot T(f\times 1_X)\cdot\kappa_{Z,X}\cdot(\gamma\times\alpha)\\
&=Tg\cdot\kappa_{Z,X}\cdot(\gamma\times\alpha)\\
&\le q^\ast\cdot g=q^\ast\cdot\ev_{X,V}\cdot(f\times 1_X)\ .
\end{align*}
By definition of $\conv$, we therefore have $\conv\cdot Tf\cdot\gamma\le f$, that is, $Tf\cdot\gamma\le\conv^\ast\cdot f$.
\end{proof}

\begin{thm}\label{thm:2}
Let $\mT$ be a lax monoidal powerset-enriched monad, $\sV$ a family of $\mT$-algebras that forms an initially dense family in $\Mons{\mT}$, and $(X,\alpha)$ a $\mT$-monoid. If $([X,V],\conv)$ is a $\mT$-algebra for all $(V,q)\in\sV$, then $(X,\alpha)$ is exponentiable in $\Mons{\mT}$.
\end{thm}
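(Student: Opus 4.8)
The plan is to reduce everything to the $\sV$-exponentiability criterion of Proposition \ref{prop:RestrictionToV}. Since $\sV$ is initially dense in $\Mons{\mT}$, that proposition guarantees that $(X,\alpha)$ is exponentiable as soon as it is $\sV$-exponentiable. Thus it suffices to produce, for each $(V,q)\in\sV$, a $\mT$-monoid structure $\nbhd_{[X,V]}:[X,V]\to T[X,V]$ turning $\ev_{X,V}:[X,V]\times X\to V$ into a $((-)\xtimes X)$-couniversal arrow for $V$, this being exactly the data demanded by the definition of $\sV$-exponentiability.

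First I would fix $(V,q)\in\sV$. By hypothesis $([X,V],\conv)$ is a $\mT$-algebra, so Proposition \ref{prop:T-AlgToT-Mon} equips $[X,V]$ with the $\mT$-monoid structure $\conv^\ast$; I would take $\nbhd_{[X,V]}:=\conv^\ast$. Proposition \ref{prop:thm:2} then applies verbatim to this situation: it asserts precisely that $([X,V],\conv^\ast)$ together with $\ev_{X,V}$ forms a $((-)\xtimes X)$-couniversal arrow for $V$, i.e.\ that for every $\mT$-monoid $(Z,\gamma)$ and every homomorphism $g:Z\xtimes X\to V$ there is a unique $\mT$-monoid homomorphism $f:Z\to[X,V]$ with $\ev_{X,V}\cdot(f\times 1_X)=g$. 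This is exactly the required factorization property for the chosen structure.

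Carrying out this step for every $(V,q)\in\sV$ shows that $(X,\alpha)$ is $\sV$-exponentiable, and Proposition \ref{prop:RestrictionToV} upgrades this to genuine exponentiability. I expect no real obstacle here: the substantive content—building the convergence structure and deriving the couniversal property from the algebra hypothesis—has already been discharged in Proposition \ref{prop:thm:2}, while the passage from one object to the whole family is supplied by initial density via Proposition \ref{prop:RestrictionToV}. The only point worth a moment's attention is that $\sV$-exponentiability is, by definition, a family of mutually independent couniversal-arrow conditions indexed by $V\in\sV$, so invoking Proposition \ref{prop:thm:2} separately for each $V$ indeed assembles exactly the data the definition requires.
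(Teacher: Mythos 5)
Your proposal is correct and follows exactly the paper's own argument: invoke Proposition \ref{prop:thm:2} for each $(V,q)\in\sV$ to obtain $\sV$-exponentiability (with $\nbhd_{[X,V]}=\conv^\ast$ as the couniversal structure), then conclude via Proposition \ref{prop:RestrictionToV} using initial density. No gaps.
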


\begin{proof}
If $([X,V],\conv)$ is a $\mT$-algebra, we can apply Proposition \ref{prop:thm:2} for each $(V,q)\in\sV$. Thus, $(X,\alpha)$ is $\sV$-exponentiable, and therefore exponentiable by Proposition~\ref{prop:RestrictionToV}.
\end{proof}

\begin{rem}
It was shown in see \cite{Sea:10} that for a powerset-enriched monad $\mT$ the family of free $\mT$-algebras $\sV=(TX,\mu_X)_{X\in\ob\Set}$ is always an initially dense in $\Mons{\mT}$. Not only is this family $\sV$ rather large, but a free object can be difficult to exploit; hence, one would hope to find in $\Mons{\mT}$ a tractable initially dense family of $\mT$-algebras to play the role of a suitable $\sV$. In our examples, it turns out that the single object $(T1,\mu_1)$ alone can be used as test object.
\end{rem}

\begin{examples}\label{ex:ExpImpliesAlg}
\item\label{ex:ExpImpliesAlg:P} Recall from \ref{EMAlgsAndSup} that a $\mP$-algebra is a complete sup-semilattice. We choose $(V,q)$ to be the set $\two=\{0,1\}$ with structure map $q=\bigvee:P\two\to\two$; one has $q^\ast(x)=\dc\, x$ for all $x\in\two$, and $(\two,\dc_\two)$ is an initially dense object in $\PrOrd\cong\Mons{\mP}$. For a preordered set $(X,\dc_X)$, a monotone map $g\in[X,\two]$ can be identified with an up-closed subset $A_g\subseteq X$ (with $(x\in A_g\ \&\ x\le y)\implies y\in A_g$), the evaluation $\ev_{X,\two}$ with the truth-value of elementhood:
\[
\ev_{X,V}(g,x)=1\iff x\in A_g\ ,
\]
and the induced order on $[X,2]$ with inclusion. Denoting by $\Up X$ the set of up-closed subsets of $X$, the map $\conv:P[X,\two]\to[X,\two]$ of \ref{ssec:conv} is given by
\begin{align*}
\conv(\sA)&=\bigwedge\{g\in[X,\two]\mid\forall x\in X\ (P\ev_{X,\two}(\sA,\dc_X x)\subseteq\dc\, g(x))\}\\
&=\bigcap\{A\in\Up X\mid\forall x\notin A, \forall B\in\sA\ (x\notin B)\}\\
&=\bigcap\{A\in\Up X\mid{\textstyle\bigcup\sA\subseteq A}\}={\textstyle\bigcup\sA}
\end{align*}
for all $\sA\subseteq[X,\two]=\Up X$ (and with respect to the lax monoidal structure $\pi$ of \ref{ex:LaxMonoidalMonad}\ref{ex:LaxMonoidalMonad:P}). Since $\Up X$ (with union as supremum) is always a complete sup-semilattice, Theorem~\ref{thm:2} states that every preordered set is exponentiable (with respect to the cartesian product). This is just the well-known result that $\PrOrd$ is cartesian closed.
%%%%%%%%%%%
\item\label{ex:ExpImpliesAlg:F} For the filter monad $\mF$, we consider for $(V,q)$ the Sierpinski space $\two=\{0,1\}$ with structure map $q(\,\ff)=\bigwedge_{A\in\ff}\bigvee A$ for all $\ff\in F\two$. The neighborhood map $q^\ast:\two\to F\two$ is given by $q^\ast(0)=\{\{0\},\{0,1\}\}$ and $q^\ast(1)=\{0,1\}$, that is, the singleton $\{0\}$ is the only non-trivial open subset; moreover, $(\two,q^\ast)$ is initially dense in $\Top\cong\Mons{\mF}$. For a topological space $(X,\alpha)$, the set of continuous maps $[X,\two]$ can be identified with closed subsets of $X$, ordered by inclusion. Considering the product filter monoidal structure, one observes
\begin{align*}
\conv(\fF)&=\bigwedge\{g\in[X,\two]\mid\forall x\in X\ (F\ev_{X,\two}(\fF\times\alpha(x))\supseteq q^\ast\cdot g(x))\}\\
&=\bigcap\{A\subseteq X,\text{ closed}\mid\forall x\notin A, \exists\sA\in\fF, \exists N\in\alpha(x)\ (N\subseteq{\textstyle\bigcap_{B\in\sA}B^\C})\}\\
%&=\bigcap\{A\subseteq X,\text{ closed}\mid x\in A^\C\implies{\textstyle\bigcup_{\sA\in\fF}\bigcap_{B\in\sA}B^\C}\in\alpha(x)\}\\
&=\bigcap\{A\subseteq X,\text{ closed}\mid\forall x\in A^\C, \exists\sA\in\fF\ (\,\textstyle{\bigcap_{B\in\sA}B^\C}\in\alpha(x))\} 
\end{align*}
for all $\fF\in F[X,\two]$ (where $B^\C$ denotes the set-complement of $B\subseteq X$). The smallest closed subset of $X$ whose complement points each have one of the sets $\bigcap_{B\in\sA}B^\C$ ($\sA\in\fF$)  as neighborhood is
\[
\textstyle(\,\bigcup_{\sA\in\fF}(\,\bigcap_{B\in\sA}B^\C)^\circ)^\C=\bigcap_{\sA\in\fF}\overline{\bigcup_{B\in\sA}B}
\]
(using the notations $\overline{S}$ and $S^\circ$ for the respective closure and interior of $S\subseteq X$). Hence, for all $\fF\in F[X,\two]$,
\[
\conv(\fF)={\textstyle\bigwedge_{\sA\in\fF}\bigvee\sA}\ .
\]
The isomorphism between the category of $\mF$-algebras and the category of continuous lattices
\[
\Set^\mF\cong\Cnt
\]
(see for example \cite[Section 5.4]{Sea:11}) yields that $\conv$ is an $\mF$-algebra structure precisely when $[X,\two]^\op$ is a continuous lattice (in the sense of \cite{Gie/al:03}). Theorem \ref{thm:3} then returns the classical result that a topological space is exponentiable (with respect to the cartesian product) if and only if its set of open subsets, ordered by inclusion, is a continuous lattice\footnote{As is often the case when working in an abstract setting, the order that emerges naturally from existing structures clashes with the one usually appearing in the literature. To cite the ``classical result'' on exponentiability of topological spaces, we need the corresponding definition of a continuous lattice---that is opposite to the one presented in \cite[Section 5.4]{Sea:11}. Reversing the order on $[X,\two]$ then leads us to switch from closed to open subsets.}.
\end{examples}

%\subsection{The main Theorem}
%With Theorems \ref{thm:1} and \ref{thm:2}, we obtain the following characterization of exponentiable $\mT$-monoids.
%
%\begin{thm}\label{thm:3}
%Let $\mT$ be a lax monoidal powerset-enriched monad, and $\sV$ a family of $\mT$-algebras that forms an initially dense family in $\Mons{\mT}$. A $\mT$-monoid $(X,\alpha)$ is exponentiable if and only if $([X,V],\conv)$ is a $\mT$-algebra for all $(V,q)\in\sV$.
%\end{thm}
%
%\begin{proof}
%The statement summarizes Theorems \ref{thm:1} and \ref{thm:2}.
%\end{proof}
%
%\begin{cor}\label{cor:thm:3}
%Let $\mT$ be a lax monoidal powerset-enriched monad, and $(V,q)$ a $\mT$-algebra such that $(V,q^\ast)$ is an initially dense object in $\Mons{\mT}$. A $\mT$-monoid $(X,\alpha)$ is exponentiable if and only if $([X,V],\conv)$ is a $\mT$-algebra.
%\end{cor}
%
%\begin{proof}
%This is the $\sV=\{(V,q)\}$ case of Theorem \ref{thm:3}.
%\end{proof}

\section{Applications}\label{sec:Examples}%%%%%%%%%%%%%%%%%%%%%%%%%%%%%%%%%

\subsection{Interior spaces}\label{Int}
The \df{up-set monad} $\mU=(U,\mu,\eta)$ arises from the adjunction
\[
\Set(-,\two)\dashv\PrOrd(-,\two):\PrOrd^\op\to\Set\ .
\]
By identifying $\Set(X,\two)$ with the powerset $PX$, we observe that $UX=\Up PX$ is the set of up-closed subsets in $PX$, and this monad can then be described as follows:
\[
B\in U f(\fx)\iff f^\inv(B)\in\fx\ ,\quad A\in\eta_X(x)\iff x\in A\ ,\quad A\in\mu_X(\fX)\iff A^\mU\in\fX\ ,
\]
for $f:X\to Y$, $\fx\in UX$, $x\in X$, $\fX\in UUX$, and where $A^\mU=\{\fx\in UX\mid A\in\fx\}$. The  monad morphism $\tau:\mP\to\mU$ that associates to $A\in PX$ the principal filter
\[
\tau_X(A):=\{B\in PX\mid A\subseteq B\}
\]
makes $\mU$ powerset-enriched. As in the filter case, the order induced by $\tau$ on the sets $UX$ (via the identification $\Set^\mP\cong\Sup$ as in \ref{EMAlgsAndSup}) is given by reverse subset-inclusion:
\[
\fx\le\fy\iff\fx\supseteq\fy
\]
for all $\fx,\fy\in UX$, and one has
\[
\Set^\mU\cong\Ccd\dand \Mons{\mU}\cong\Int\ ,
\]
where $\Ccd$ and $\Int$ denote the category of constructively completely distributive lattices and the category of interior spaces, respectively (see \cite{Sea:09}).

Let us note that the exponentiable spaces with respect to the cartesian product in $\Int$ are the indiscrete ones (see \cite[Corollary 2.3]{ClaColSon:01}). There is nevertheless a family of maps $\kappa_{X,Y}:UX\times UY\to U(X\times Y)$ given by
\[
\kappa_{X,Y}(\fx,\fy):=\{A\times B\mid A\in\fx, B\in\fy\}
\]
(for all $\fx\in UX$, $\fy\in UY$) that makes $(\mU,\tau)$ lax monoidal, and allows us to define a monoidal structure $(-)\xtimes(-)$ on $\Int$. The open subsets of the box product $(X,\alpha)\xtimes(Y,\beta)$ of two interior spaces are given by arbitrary unions of open subsets of the form $V\times W$. By identifying open subsets of $(X,\alpha)$ with the function space $[X,\two]^\op$ and observing that the Sierpinski space $\two$ is also initially dense in $\Int$, Theorem \ref{thm:3} states that the exponentiable interior spaces are those $X$ whose set of open subsets forms a constructively completely distributive lattice. %Thus, by replacing the cartesian product by the box product, we obtain exponentiable interior spaces that closely mirrors the topological case.%This result  faithfully reproduces the topological case, and provides an interesting collection of exponentiable spaces. 

\subsection{Quantale-enriched categories}
Given a quantale $V$ with tensor $\otimes$ and neutral element $k$, the \df{$V$-powerset functor} $P_V$ sends a set $X$ to its \df{$V$-powerset} $V^X$, and a function $f:X\to Y$ to $P_V f:V^X\to V^Y$, where
\[
P_V f(\phi)(y):=\textstyle{\bigvee_{x\in f^\inv(y)}\phi(x)}\ ,
\]
for all $\phi\in V^Y$, $y\in Y$. The multiplication $\mu:P_V P_V\to P_V$ and unit $\eta:1_\Set\to P_V$ of the \df{$V$-powerset monad} $\mP_V$ on $\Set$ are given respectively by
\[
\mu_X(\Phi)(y):=\textstyle{\bigvee_{\phi\in V^X}
\Phi(\phi)\otimes\phi(y)}
\dand
\eta_X(x)(y):=
\begin{cases}
k&\text{ if $x=y$}\\
\bot&\text{ otherwise\ ,}
\end{cases}
\]
for all $x,y\in X$, $\Phi\in V^{V^X}$ (where $\bot$ denotes the bottom element of the lattice). There is a  monad morphism $\tau:\mP\to\mP_V$ whose components $\tau_X$ send a subset $A\subseteq X$, identified with its characteristic function $\chi_A:X\to\{\bot,\top\}$, to the characteristic function $\tau_X(A)=\iota\cdot\chi_A$ of $A$ in $V^X$:
\[
\tau_X(A)(x):=\begin{cases}
k&\text{ if $x\in A$}\\
\bot&\text{ otherwise.}
\end{cases}
\]
With this monad morphism $\mP_V$ becomes a lax monoidal powerset-enriched monad and one has
\[
\Set^{\mP_V}\cong\Sup^V\dand\Mons{\mP_V}\cong\Cats{V}\ ,
\]
where $\Sup^V$ is the category of $V$-actions in $\Sup$, and $\Cats{V}$ is the category of small categories enriched in $V$ (see \cite{Stu:06}). The order induced by $\tau$ on the sets $V^X$ is the pointwise order, a $\mP_V$-algebra $(X,a)$ is a complete sup-semilattice with structure map $a:V^X\to X$ corresponding to an action $a(\phi)=\bigvee_{x\in X}\phi(x)\otimes x$, and the $V$-category $(V,q^\ast)$ is initially dense in $\Cats{V}$.

\subsubsection{The monoidal structure} If the quantale $V$ is commutative, then for $\phi\in P_V X$ and $\psi\in P_V Y$,
\[
\kappa_{X,Y}(\phi,\psi)=\phi\otimes\psi
\]
defines a natural transformation $\kappa:P_V(-)\times P_V(-)\to P_V(-\times-)$ that makes $(\mP_V,\tau)$ lax monoidal. In this case, one computes for a $V$-category $(X,\alpha)$ and $\phi\in V^{[X,V]}$,
\begin{align*}
\conv(\phi)&=\bigwedge\{g\in[X,V]\mid\forall x\in X\  (q\cdot P_V\ev_{X,V}(\phi\otimes\alpha(x))\le g(x)\}\\
&=\bigwedge\{g\in[X,V]\mid\forall x\in X\  ({\textstyle\bigvee_{v\in V,(f,y)\in\ev^{-1}(v)}\phi(f)\otimes\alpha(x)(y)\otimes v}\le g(x))\}\\
&={\textstyle \bigvee_{f\in[X,V]}\phi(f)\otimes f}\ ,
\end{align*}
since $k\le \alpha(x)(x)$, and $f\in[X,V]$ means that $\bigvee_{y\in X}\alpha(x)(y)\otimes f(y)\le f(x)$ for all $x\in X$. Mirroring the powerset case, every hom-set $[X,V]$ of $\Cats{V}$ is a $V$-action with this structure, so that Theorem~\ref{thm:2} simply states the well-known fact that $\Cats{V}$ is monoidal closed (with respect to its natural monoidal structure induced by the tensor of $V$ as above).

\subsubsection{The cartesian structure} If the underlying lattice of $V$ is a frame and $k=\top$, the maps $\rho_{X,Y}:P_V X\times P_V Y\to P_V(X\times Y)$ given by
\[
\rho_{X,Y}(\phi,\psi)=\phi\wedge\psi
\]
(for all $\phi\in P_V X$, $\psi\in P_V Y$) form a natural transformation that defines the cartesian product in $\Cats{V}$. This natural transformation makes $(\mP_V,\tau,\rho)$ into a lax monoidal monad. 

For a subset $\sA\subseteq[X,V]$, one computes
\begin{align*}
\conv(\chi_\sA)&=\bigwedge\{g\in[X,V]\mid\forall x\in X\  (q\cdot P_V\ev_{X,V}(\chi_\sA\wedge\alpha(x))\le g(x))\}\\
&=\bigwedge\{g\in[X,V]\mid\forall x\in X\  ({\textstyle\bigvee_{f\in[X,V],y\in X}(\chi_\sA(f)\wedge\alpha(x)(y))\otimes f(y)}\le g(x))\}\\
&=\bigwedge\{g\in[X,V]\mid\forall x\in X\  ({\textstyle\bigvee_{f\in\sA,y\in X}(k\wedge\alpha(x)(y))\otimes f(y)}\le g(x))\}\ .
\end{align*}
Since $k\le\alpha(x)(x)$ and $\bigvee_{y\in X}\alpha(x)(y)\otimes f(y)\le f(x)$, one has
\[
\conv(\chi_\sA)=\textstyle{\bigvee\sA}
\]
for all $\sA\subseteq[X,V]$; hence, if $\conv:P_V[X,V]\to[X,V]$ is a $\mP_V$-algebra structure, the order induced on $[X,V]$ by $\conv\cdot\tau_{[X,V]}$ is the pointwise order. One also has $\conv\cdot \eta_{[X,V]}=1_{[X,V]}$ since $\eta_{[X,V]}(f)=\chi_{\{f\}}$ for all $f\in[X,V]$.

With $k=\top$ in $V$, on has for $\psi\in P_V[X,V]$ that the map $\tilde{\psi}:X\to V$, defined by
\[
\tilde{\psi}(y):=\textstyle\bigvee_{f\in[X,V],z\in X}(\psi(f)\wedge\alpha(y)(z))\otimes f(z)\ ,
\]
is a $\mP_V$-monoid homomorphism. Indeed, for $x\in X$, one has
\begin{align*}
q\cdot P_V\tilde{\psi}(\alpha(x))&\textstyle=\bigvee_{y\in X}\alpha(x)(y)\otimes\tilde{\psi}(y)\\
&\textstyle=\bigvee_{y\in X}\alpha(x)(y)\otimes\big(\bigvee_{f\in[X,V],z\in X}(\psi(f)\wedge\alpha(y)(z))\otimes f(z)\big)\\
&\textstyle\le \bigvee_{f\in[X,V],z\in X}\big( (\,\bigvee_{y\in X}\alpha(x)(y)\otimes\psi(f) )\wedge(\,\bigvee_{y\in X}\alpha(x)(y)\otimes\alpha(y)(z))\big)\otimes f(z)\\
&\textstyle\le \bigvee_{f\in[X,V],z\in X}(\psi(f)\wedge\alpha(x)(z)))\otimes f(z)=\tilde{\psi}(x)\ ,
\end{align*}
so that $P_V\tilde{\psi}\cdot\alpha\le q^\ast\cdot\tilde{\psi}$. Hence, for any $\psi\in P_V[X,V]$,
\[
\conv(\psi)=\tilde{\psi}\ ,
\]
The condition $\conv\cdot P_V\conv(\Phi)=\conv\cdot\mu_{[X,V]}(\Phi)$ for all $\Phi\in V^{V^[X,V]}$ then becomes
\begin{equation}\label{cond:ExpV-CatCartesianPrelim}\tag{$\dagger$}
\textstyle\bigvee(\Phi(\psi)\wedge\alpha(x)(y))\otimes(\psi(f)\wedge\alpha(y)(z))\otimes f(z)=\bigvee\big((\Phi(\psi)\otimes\psi(f))\wedge\alpha(x)(z)\big)\otimes f(z)
\end{equation}
for all $\Phi\in V^{V^{[X,V]}}$ and $x\in X$, with the supremum on each side ranging over all $\psi\in V^{[X,V]}$, $f\in[X,V]$, and $y,z\in X$. As the left-hand side is always smaller than the right-hand side, this condition can be read with a ``$\ge$'' sign instead of equality. For a chosen $t\in X$, if $\psi_{v,t}\in V^{[X,V]}$ is the map that sends $\alpha(-)(t)\in[X,V]$ to $v\in V$ and all other $f\in[X,V]$ to the bottom element $\bot\in V$, and $\Phi_{u,v,t}\in V^{V^{[X,V]}}$ is the map that sends $\psi_{v,t}$ to $u\in V$ and all other maps to $\bot$, one obtains from condition \eqref{cond:ExpV-CatCartesianPrelim} and $k=\top$:\begin{align*}
\textstyle\bigvee_{y\in X}(u\wedge\alpha(x)(y))\otimes(v\wedge\alpha(y)(t))&\textstyle\ge\bigvee_{z\in X}\big((u\otimes v)\wedge\alpha(x)(z)\big)\otimes\alpha(z)(t)\\
&\textstyle\ge (u\otimes v)\wedge\alpha(x)(t)\ .
\end{align*}
Hence, \eqref{cond:ExpV-CatCartesianPrelim} implies
\[
\textstyle\bigvee_{y\in X}(u\wedge\alpha(x)(y))\otimes(v\wedge\alpha(y)(z))\ge(u\otimes v)\wedge\alpha(x)(z)
\]
for all $u,v\in V$ and $x,z\in X$. Since \eqref{cond:ExpV-CatCartesianPrelim} is also a consequence of this last condition, and $\conv\cdot\eta_{[X,V]}=1_{[X,V]}$ always holds, Theorem \ref{thm:3} recovers the characterization from \cite[Corollary 3.5]{CleHof:06} of exponentiable $V$-categories $(X,\alpha)$ (with respect to the cartesian product)  when the underlying lattice of $V$ is a frame and $k=\top$. Note that commutativity of the tensor of $V$ is not necessary.

\bibliographystyle{plain}

\end{document}